\documentclass[10pt,reqno]{article}
\usepackage{amsmath,amssymb,amsthm}
\usepackage{esint}
\usepackage{bm}
\usepackage{subfigure}
\usepackage{graphicx,color}
\usepackage{hyperref}
\hypersetup{
    colorlinks=true,
    linkcolor=blue,
    filecolor=magenta,      
    urlcolor=cyan,
    citecolor=blue,
}


\setlength{\textwidth}{145mm} \setlength{\textheight}{205mm}
\setlength{\oddsidemargin}{11mm} \setlength{\evensidemargin}{11mm}







\newtheorem{theorem}{Theorem}[section]
\newtheorem{proposition}[theorem]{Proposition}
\newtheorem{definition}[theorem]{Definition}
\newtheorem{remark}[theorem]{Remark}


\newcommand{\pathfigures}{Figures/}
\graphicspath{{\pathfigures}}
\usepackage{cleveref}

\begin{document}
\title{Fractional calculus approach for the phase dynamics of Josephson
junction under the influence of magnetic field}

\author{
Amer Rasheed\thanks{Department of Mathematics, School of Science and Engineering, Lahore University of Management Sciences, Opposite Sector U, DHA, Lahore Cantt., 54792, Pakistan.}, 
\and
Imtiaz Ali
}
\maketitle

\begin{abstract}
This article presents the phase dynamics of an inline long Josephson junction in voltage state under the influence of constant external magnetic field. Fractional calculus approach is used to model the evolution of the phase difference between the macroscopic wave functions of the two superconductors across the junction. The governing non-linear partial differential equation is then solved using finite difference-finite element schemes. Other quantities of interest like Josephson current density and voltage across the junction are also computed. The effects of various parameters in the model on  phase difference,Josephson current density and voltage
are analyzed graphically with the help of numerical simulations.
\end{abstract}


\noindent {\footnotesize {\bf Key words.} Inline long Josephson junction;voltage state;Fractional Nonlinear PDEs; Finite element method.}

\section{Introduction}
Superconductors have found a wide range of applications in modern times. They are used in alomst all modern day technologies . Extensive researches have been carried out about superconductors and are still going on. An important phenomenon which arises from superconductivity is Josephson Effect. Two superconductors when weakly connected, for example by a very thin non superconducting barrier, a current flows through the barrier without any dissipation of energy and power supply\cite{r7}. This phenomenon is called Josephson Effect and the junction is called Josephson Junction. A wide range of modern day technologies operate making use of this phenomenon. Superconducting quantum interference devices (SQUIDs) and Superconducting tunnel junction detectors (STJs) are based on Josephson Effect, see for example \cite{r11}. SQUID is a highly sensitive magnetometer which can detect extremely small magnetic fields. Similarly shuted Josephson junctions are used in RSFQ digital electronics \cite{r11}. RSFQ is used to process digital signals.\par
The electrodynamics of the Josepshon junction is completely described by the phase difference of the macroscopic wavefunctions of the two superconductors across the junction\cite{r7}. The evolution of the phase difference is modeled by a nonlinear partial differential equation which is a perturbed Time-Dependent Sine Gordon Equation \cite{r7}. No general analytical solution to this equation has been found  yet\cite{r8}. So various numerical schemes are used to find approximate solution. Different numerical schemes have been used by different researchers to solve the model for various geometries of the junction. The solution of long Josephson junction using finite difference method is discussed by Tinega Abel Kurura, Oduor Okoya and Omolo Ongati \cite{r1}. The steady state case of window Josephson junction using Finite element mehtod is studied by Manolis Vavalis, Mo Muc and Giorgos Sarailidi \cite{r2}. P. Kh. AtanasovaEmail, T. L. Boyadjiev, Yu. M. Shukrinov, E. V. Zemlyanaya \cite{r12} discussed static distributions of the magnetic flux in long Josephson junctions. The Numerical simulations of long Josephson junctions driven by large external radio-frequency signals is analyzed by G. Rotoli, G. Costabile, and R. D. Parmentier \cite{r13}. J. G. Caputo, N. Flytzanis, E. A. Vavalis \cite{r14} discussed a semi-linear elliptic PDE model for the static solution of Josephson junctions. Numerical Study of a system of long Josephson junctions with inductive and capacitive couplings are studied by I. R. Rahmonov , Yu. M. Shukrinov ,  A. Plecenik ,  E. V. Zemlyanaya and M. V. Bashashin \cite{r15}. \par
In all above mentioned researches they have used integer order time derivative in their models. But it has been validated by experiments that the physical models involving continuous evolution are better modeled by employing fractional derivatives. These models keep tracks
and memory of previous deformations which helps to give more accurate physical simulations as compared to integer order time derivatives.\par
In this article the evolution of the phase difference between the macroscopic wave functions of the two superconductors across a long
inline Josephson junction, in voltage state and under the influence of magnetic field, is modeled
using fractional time derivatives instead of integer order time derivatives. The model is then solved using the Finite element scheme along with Finite difference method. Quantities of interest, $i.e$, phase difference, Josephson current density and voltage, are then studied graphically. The effect of various parameters involved in the model, on the aforementioned quantities of interest are then discussed in detail.\par
The mathematical formulation of the model is presented in Section \cref{S2}. In Section \cref{S3} the numerical solution of the model using finite element-finite difference scheme is discussed. Error analysis and convergence of the numerical scheme is presented in Section \cref{S4}. Graphical results and their discussions are given in Section \cref{S5} and conclusion is given in Section \cref{S6}.
  
\section{Mathematical Formulation} \label{S2}
\begin{definition}
The Caputo left sided fractional derivative of order $\alpha\,$, where $\alpha \in \mathbb{C}\,$ and $\,\Re e \{\alpha\} > 0 \,$, of a complex valued function \textit{f(t)}, $\,\textit{t} \in \mathbb{R}\,$, with respect to \textit{t} is defined as  \cite{r5}
$$ \frac{\partial^\alpha}{\partial {\textit{t}}^\alpha} f(t) = \partial^\alpha f(t) = \frac{1}{\Gamma(m-\alpha)}\int_0^t (\,t - \tau\,)^{m-\alpha-1}\frac{\partial^m}{\partial{\tau}^m} f(\tau)\,d\tau$$
where $\textit{m} \in \mathbb{N}$ is such that $\,m-1<\Re e\{\alpha\} < m.\,$\\
Here $\Gamma(\cdot)$ denotes the Euler Gamma function defined as: 

$$ \Gamma(z) = \int_\mathbb{R}\xi^{z-1}e^{-\xi}\,d\xi,\qquad z\in \mathbb{C\quad}, \Re e\{z\}>0$$
\end{definition}

\begin{remark}
If $\alpha \in \mathbb{R}$ then the fractional derivative $\partial^\alpha f(t)$ converges to usual derivative of integer order $\partial^mf(t)$ as $\alpha \rightarrow m \in \mathbb{N},\, where \quad m-1< \alpha < m.$ \cite{r5}
\end{remark}

\begin{proposition}
Let $\alpha \in \mathbb{C}\,$ such that $\Re e\{\alpha\} > 0$ and $\,m-1<\Re e\{\alpha\} < m\,$ for some $m \in \mathbb{N}.\,$ Then for $s \in \mathbb{C}\,$ such that $\Re e\{s\} > 0\,$ we have: \cite{r5}
$$ \partial^{\alpha} t^s = \frac{\Gamma (s+1)}{\Gamma (s+1-\alpha)} t^{s-\alpha},\qquad \forall t > 0 $$
\end{proposition}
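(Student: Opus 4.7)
The plan is to apply the Caputo definition directly with $f(t)=t^s$ and reduce the resulting integral to a Beta integral.

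First I would compute the ordinary $m$-th derivative $\partial^m t^s$. Iterating the power rule gives $\partial^m t^s = s(s-1)\cdots(s-m+1)\,t^{s-m}$, which by repeated application of the functional equation $\Gamma(z+1)=z\Gamma(z)$ can be rewritten compactly as
$$\partial^m t^s = \frac{\Gamma(s+1)}{\Gamma(s-m+1)}\, t^{s-m}.$$
Substituting this into the Caputo definition yields
$$\partial^\alpha t^s = \frac{1}{\Gamma(m-\alpha)}\cdot \frac{\Gamma(s+1)}{\Gamma(s-m+1)} \int_0^t (t-\tau)^{m-\alpha-1}\tau^{s-m}\,d\tau.$$

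Next I would evaluate the integral by the change of variables $\tau = tu$, so that $d\tau = t\,du$ and the integration range becomes $[0,1]$. The powers combine to give
$$\int_0^t (t-\tau)^{m-\alpha-1}\tau^{s-m}\,d\tau = t^{s-\alpha}\int_0^1 (1-u)^{m-\alpha-1}u^{s-m}\,du = t^{s-\alpha}\,B(s-m+1,\,m-\alpha),$$
where $B(\cdot,\cdot)$ is the Euler Beta function. Using the identity $B(x,y)=\Gamma(x)\Gamma(y)/\Gamma(x+y)$ gives
$$B(s-m+1,\,m-\alpha) = \frac{\Gamma(s-m+1)\,\Gamma(m-\alpha)}{\Gamma(s+1-\alpha)}.$$
Plugging this back and cancelling the factors $\Gamma(m-\alpha)$ and $\Gamma(s-m+1)$ produces the claimed formula.

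The main obstacle is handling the convergence of the Beta integral at $\tau=0$, which requires $\Re(s-m+1)>0$, together with the complex-valued nature of $\alpha$ and $s$. For $m=1$ the hypothesis $\Re(s)>0$ suffices directly. For $m\geq 2$ I would first carry out the computation assuming $\Re(s)>m-1$ so that the Beta integral converges absolutely, and then invoke analytic continuation in the variable $s$: both sides of the identity are analytic functions of $s$ on $\{\Re(s)>0\}\setminus\{s : s+1-\alpha\in\{0,-1,-2,\dots\}\}$ (using that $1/\Gamma$ is entire), so agreement on the half-plane $\Re(s)>m-1$ forces agreement on the full domain. The convergence at $\tau=t$ requires $\Re(m-\alpha)>0$, which is guaranteed by the standing assumption $\Re(\alpha)<m$. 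This takes care of all the analytic subtleties introduced by working with complex exponents.
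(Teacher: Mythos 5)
Your core computation is the standard proof of this fact, and in fact the paper itself offers no argument at all for the proposition --- it is stated with only a citation to \cite{r5} --- so there is no ``paper proof'' to match; on the range where the Caputo integral actually converges your argument is complete and correct. Specifically, writing $\partial^m \tau^s = \frac{\Gamma(s+1)}{\Gamma(s-m+1)}\tau^{s-m}$, substituting $\tau = tu$, and recognizing the Beta integral $B(s-m+1,\,m-\alpha)=\frac{\Gamma(s-m+1)\Gamma(m-\alpha)}{\Gamma(s+1-\alpha)}$ gives exactly the claimed formula whenever $\Re(s-m+1)>0$ and $\Re(m-\alpha)>0$, the latter being guaranteed by $\Re(\alpha)<m$. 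For the case actually used in this paper ($0<\alpha<1$, so $m=1$) the hypothesis $\Re(s)>0$ is precisely what is needed, and your proof is fine as it stands.

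The genuine gap is in the analytic-continuation step for $m\ge 2$. You assert that both sides are analytic in $s$ on the half-plane $\Re(s)>0$, but the left-hand side is the Caputo integral $\frac{1}{\Gamma(m-\alpha)}\int_0^t(t-\tau)^{m-\alpha-1}\,\partial^m_\tau(\tau^s)\,d\tau$, and for $0<\Re(s)\le m-1$ (with $s$ non-integer) the integrand behaves like $\tau^{s-m}$ near $\tau=0$, which is not integrable; the left-hand side is then simply undefined by the definition given in the paper, not an analytic function awaiting identification. The identity theorem therefore only shows that the right-hand side is \emph{the analytic continuation in $s$} of the Caputo derivative from the region $\Re(s)>m-1$; it does not prove the proposition as literally stated for all $\Re(s)>0$. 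This is arguably a defect in the proposition's hypotheses (the correct condition for $m\ge2$ is $\Re(s)>m-1$, or $s\in\{0,1,\dots,m-1\}$ where the derivative vanishes), but a proof should state this restriction, or explicitly reinterpret the left-hand side as a continuation, rather than claim analyticity of an integral that diverges.
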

\subsection{Governing equations of the model}
In this article we consider a long Josephson junction with length 2\textit{L} .  So $ 2\textit{L} \gg  \lambda_J $, here $\lambda_J $ is the Josephson penetration depth \cite{r7}. The junction lies in the \textit{y}-\textit{z} plane. To make the model simpler and one-dimensional the width $\textit{W}$ of the junction is kept very small as compared to $\, \lambda_J\,$. i.e. $ \textit{W} \ll  \lambda_J $. In this case the phase difference $\varphi$ will vary only along the length of the junction i.e along z-axis \cite{r7}. We have chosen the  origin of the coordinate system at the midpoint of the length of the junction.The barrier thickness is $\textit{d}$ and its area is \textit{A}. A constant external magnetic field $\mathbf{B}^{ex}$ is applied across the junction in the \textit{y}-direction. So $\mathbf{B}^{ex} = B_y^{ex}\,\mathbf{\hat{j}}$. A constant current, applied in the negative \textit{x}-direction, is maintained across the junction . Let $\mathbf{J}$ be the density of the total current flowing through the junction. So $\mathbf{J} = J_{x}\,\mathbf{\hat{i}}$ and we consider $J_{x} > J_c $, here $J_c $ is the maximum Josephson current density. So under these circumstances the Josephson junction resists in the voltage state \cite{r7}. 
In long Josephson junction magnetic field generated by Josephson current cannot be neglected \cite{r7}, so the total magnetic field $\mathbf{B}$ through the junction includes both the externally applied magnetic field and the one generated by Josephson current. For the geometry in Fig. \cref{fig:1.1} \cite{r7} the total magnetic field $\mathbf{B}$ will still point in the \textit{y}-direction and it will vary only along the \textit{z}-direction i.e. $\mathbf{B}(z,t) = B_y(z,t)\,\mathbf{\hat{j}}$ and we can write:  \cite{r7}

\begin{figure}[!h]
\begin{center}
\includegraphics[width = .80\textwidth, height = .62\textwidth]{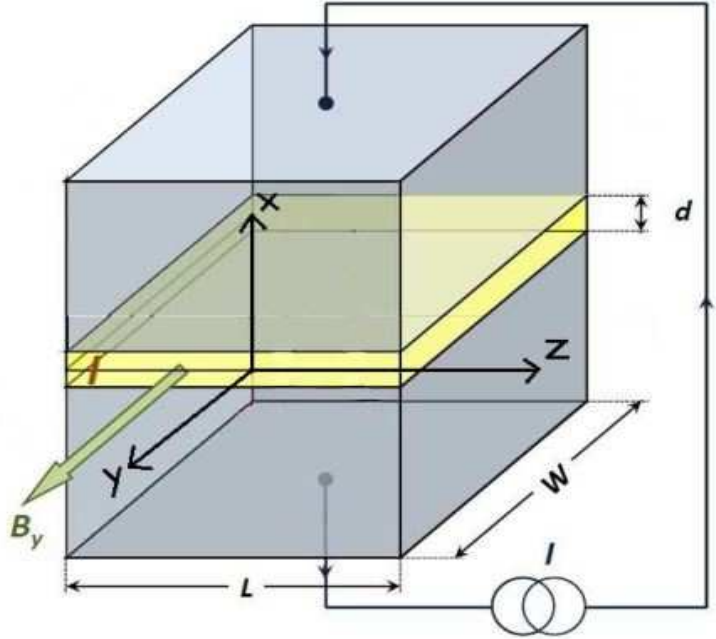}
\caption{Josephson Junction}
 \label{fig:1.1}
\end{center}
\end{figure}

\begin{equation} \label{eq:1}
    \frac{\partial \varphi(z,t)}{\partial z} = \frac{2\pi t_{B} B_y(z,t)}{\Phi_{0}}
\end{equation}
Where $\Phi_{0} = \frac{\displaystyle h}{\displaystyle 2e}$ is the magnetic flux quantum , $t_B = d + \lambda_{L1} + \lambda_{L2}$ $, \, \lambda_{L1} \,\textrm{and}\,\lambda_{L2}\,$ are the London penetration depth of the two superconductors respectively across the junction and $d$ is junction's thickness.\par
Using fractional form, with respect to time $t$, of Ampere's law, given in \cite{r16}-\cite{r17}
\begin{equation} \label{eq:2}
    \nabla \times \mathbf{B} = \mu_0\mathbf{J} + \frac{\mu_0 \epsilon \epsilon_0}{\eta^{1 - \alpha}} \frac{\partial^\alpha \mathbf{E}}{{\partial t}^\alpha}
\end{equation}
where $\mathbf{E}$ is the electric field, $\epsilon$ is the dielectric constant of the junction material and $\eta$ is an arbitrary quantity with dimension of [$\mathbf{T}$]. Since current is flowing only in the \textit{x}-direction, so $\mathbf{E}$ will point in the \textit{x}-direction and will vary along the \textit{z}-direction i.e. $\mathbf{E}(z,t) = {E_x(z,t)}\,\mathbf{\hat{j}}$. So \eqref{eq:2} becomes,
\begin{equation}  \label{eq:3}
    -\frac{\partial B_{y}(z,t)}{\partial z} = \mu_0 J_{x}(z,t) + \mu_0 \epsilon \epsilon_0 \frac{{\partial}^\alpha E_{x}(z,t)}{{\partial t}^\alpha}
\end{equation}
Now \eqref{eq:1} implies,
\begin{equation} \label{eq:4}
        \frac{\partial B_y(z,t)}{\partial z} = \frac{\Phi_{0}}{2\pi t_{B}} \frac{{\partial}^2 \varphi(z,t)}{{\partial z}^2}
\end{equation}
Now let \textit{V(z,t)} be the voltage across the junction. So 
\begin{equation*}
E_{x}(z,t) = -\frac{V(z,t)}{d}.
\end{equation*}
On the same line of argument given in \cite{r16}-\cite{r17} the fractional form, with respect to time $t$, of Josephson second equation(phase-voltage relation) can be written as:
\begin{equation} \label{eq:5}
\frac{1}{{\eta}^{1- \alpha}}\frac{\partial^{\alpha}\varphi(z,t)}{{\partial t}^{\alpha}} = \frac{2\pi}{{\Phi}_0}V(z,t)
\end{equation}
So,
\begin{equation} \label{eq:6}
    \frac{{\partial}^\alpha E_{x}(z,t)}{{\partial t}^\alpha} = -\frac{\Phi_{0}}{2\pi d {\eta}^{1- \alpha}}\frac{{\partial}^{2\alpha} \varphi}{{\partial t}^{2\alpha}}
\end{equation}

Now the total current density $J_{x}$ would have contributions from three different types which are Josephson current(or supercurrent) density $(J_{s})$ , normal current(or resistive current) density $(J_N)$ and bias current density $(J_{bias}).$  \cite{r7} 
\begin{equation} \label{eq:7}
    J_{x} = -J_{s} - J_{N} + J_{bias}   
\end{equation}
Negative sign because the current is flowing in negative \textit{x}-direction.Now,
\begin{equation} \label{eq:8}
    J_{N} = \frac{\sigma_{0} V}{A}= \frac{\sigma_{0} \Phi_{0}}{2\pi A {\eta}^{1- \alpha}}\frac{\partial^{\alpha} \varphi}{{\partial t}^{\alpha}},
\end{equation}
Where $\sigma_{0}$ is the resistivity of the junction. Now from Josephson first equation (phase-current relation) we have,
\begin{equation} \label{eq:9}
J_{s} = J_{c} \sin{(\varphi)}
\end{equation}
 
Substituting  \eqref{eq:8} and \eqref{eq:9} in \eqref{eq:7}, we get
\begin{equation} \label{eq:10}
    J_{x}(z,t) = -J_{c}\sin{(\varphi(z,t))} - \frac{\sigma_{0} \Phi_{0}}{2\pi A {\eta}^{1- \alpha}}\frac{\partial^{\alpha} \varphi}{{\partial t}^{\alpha}} + J_{bias} 
\end{equation}

Substituting \eqref{eq:4},\eqref{eq:6} and  \eqref{eq:10} in \eqref{eq:3} and simplifying, we get,
\begin{equation} \label{eq:11}
        -\frac{{\partial}^2 \varphi(z,t)}{{\partial z}^2} + \frac{1}{{\overline{c}}^2 {\eta}^{1- \alpha}}\frac{{\partial}^{2\alpha} \varphi(z,t)}{{\partial t}^{2\alpha}}
    + \frac{\beta}{{\overline{c}}^2 {\eta}^{1- \alpha}}\frac{{\partial}^{\alpha} \varphi(z,t)}{{\partial t}^{\alpha}}
    + \frac{1}{{\lambda_J}^2}\sin{(\varphi(z,t))}
    = \frac{J_{bias}}{{J_{c} \lambda_J}^2}
\end{equation}
\begin{align*}
\textrm{where}\quad \overline{c} &= \sqrt{\frac{d}{\epsilon \epsilon_{0} \mu_{0} t_{B}}}, \ \ \beta = \frac{\sigma_{0}}{C} \ \ \textrm{and} \ \
{\lambda}_J = \sqrt{\frac{{\Phi}_0}{2\pi{\mu}_0 t_B J_c}} \\
 C &= \frac{\epsilon \epsilon_{0} A}{d} \quad \textrm{is the capacitance of the junction} 
\end{align*}
\subsection{Initial and Boundary Conditions}
Initially the voltage \textit{V} across the junction is 0. So from Josephson second equation we have:
\begin{equation}\label{eq:12}
 \frac{\partial \varphi}{\partial t}(z,0) = 0
\end{equation}
In case of inline junction the Neuman boundary is given by\cite{r7}:
\begin{equation} \label{eq:13}
\frac{\partial \varphi}{\partial z}(-L,t) = \frac{2\pi t_{B}}{\Phi_{0}}\big({B_y}^{ex} -\frac{I\mu_{0}}{2W} \big) \quad \textrm{and}\quad \frac{\partial \varphi}{\partial z}(L,t) = \frac{2\pi t_{B}}{\Phi_{0}}\big({B_y}^{ex} + \frac{I\mu_{0}}{2W} \big)
\end{equation}
where \textit{I} is the total current flowing through the junction.\par
Since $L \gg \lambda_J$. Lets take $ L = 10\lambda_J$. We take following boundary conditions:
\begin{equation}
    \frac{\partial \varphi}{\partial z}(-10\lambda_J,t)  = \frac{0.00189}{\lambda_J}  \quad
    \textrm{and} \quad \frac{\partial \varphi}{\partial z}(10\lambda_J,t) = \frac{0.00163}{\lambda_J}
\end{equation} \label{eq:14}
For $\varphi(z,0)$  we take a particular solution of the stationary Sine-Gordon equation as our initial phase difference given in \cite{r7}:
\begin{equation} \label{eq:15}
    \varphi(z,0) = 4 \arctan\bigg(\exp{\bigg({\frac{ \frac{\displaystyle z}{\lambda_J} - 0.1}{\sqrt{1 - c^2}}}\bigg)}\bigg)
\end{equation}
Here we take $c = 0.9$. So the final model becomes:
\begin{equation}\label{eq:16}
    \begin{cases} \displaystyle
    -\frac{{\displaystyle\partial}^2 \displaystyle\varphi(z,t)}{{\displaystyle\partial \displaystyle z}^2} + \frac{1}{{\displaystyle \overline{c}}^2 {\eta}^{1 - \alpha}}\frac{{\displaystyle \partial}^{2\alpha} \displaystyle \varphi(z,t)}{{\displaystyle \displaystyle \partial \displaystyle t}^{2\alpha}}
    + \frac{\displaystyle \beta}{{\displaystyle \overline{c}}^2 {\eta}^{1 - \alpha}}\frac{\displaystyle {\partial}^{\alpha} \displaystyle \varphi(z,t)}{\displaystyle {{\partial \displaystyle t}^{\alpha}}}
    + \frac{1}{\displaystyle {\lambda_J}^2}\displaystyle \sin{(\displaystyle \varphi(z,t))}
    = \frac{\displaystyle J_{bias}}{\displaystyle {J_{c} \lambda_J}^2} \\
 \qquad \qquad     \frac{\displaystyle \partial \displaystyle \varphi}{\displaystyle \partial \displaystyle z}(-10 \displaystyle \lambda_J,\displaystyle t)  = \frac{\displaystyle A}{\displaystyle \lambda_J} \qquad \frac{\displaystyle \partial \displaystyle  \varphi}{\displaystyle \partial \displaystyle z}(10\displaystyle \lambda_J,\displaystyle t) = \frac{\displaystyle B}{\displaystyle \lambda_J} \\
  \displaystyle \varphi(\displaystyle z,0) = \displaystyle 4 \displaystyle \arctan \displaystyle \bigg(\displaystyle \exp{\displaystyle \bigg({\frac{ \frac{\displaystyle z}{\displaystyle \lambda_J} - \displaystyle 0.1}{\displaystyle \sqrt{1 - c^2}}}\bigg)}\bigg) \ \ \displaystyle \textrm{and} \ \  
\frac{\displaystyle \partial \displaystyle \varphi}{\displaystyle \partial t}(\displaystyle z , 0) = 0
    \end{cases}
\end{equation}
where $A = 0.00189$ and $B = 0.00163$
 \subsection{Non-dimensionalization of the Model}
To non-dimensionalized the IBVP we use following dimensionless quantities:
\begin{equation}\label{eq:17}
\hat{z} = \frac{z}{{\lambda}_J } \ \ ,\ \  \hat{t} = \frac{\overline{c} t}{{\lambda}_J}
\end{equation}
As ${\lambda}_J$ has dimension of [$\mathbf{L}$] and $\overline{c}$ has dimension of [$\mathbf{LT^{-1}}$]
Substituting \eqref{eq:17} in \eqref{eq:16} and ignoring hats, we get following dimensionaless model:
\begin{equation} \label{eq:18}
    \begin{cases} \displaystyle
    -{\frac{ \displaystyle \partial^2 \displaystyle \varphi}{ \displaystyle \partial z^2}} +  {\displaystyle \gamma}_2\frac{ \displaystyle \partial^{2} \displaystyle \varphi}{{ \displaystyle \partial  \displaystyle t}^{2}} +  {\displaystyle \gamma}_1\frac{ \displaystyle \partial \displaystyle \varphi}{{ \displaystyle \partial  \displaystyle t}} +  \displaystyle \sin{( \displaystyle \varphi)} =  \displaystyle \lambda  \\
  \frac{ \displaystyle \partial \displaystyle \varphi}{ \displaystyle \partial  \displaystyle z}(-10,  \displaystyle t ) = A \ \ ,\ \ \frac{ \displaystyle \partial \displaystyle \varphi}{ \displaystyle \partial  \displaystyle z}(10 ,  \displaystyle t ) = B \\
   \displaystyle \varphi(z,0) = 4  \displaystyle \arctan\bigg( \displaystyle \exp{\bigg({\frac{  \displaystyle z - 0.1}{ \displaystyle \sqrt{1 - c^2}}}\bigg)}\bigg)  \ \ , \ \  
\frac{ \displaystyle \partial  \displaystyle \varphi}{ \displaystyle \partial  \displaystyle t}( \displaystyle z , 0) = 0 
    \end{cases}
\end{equation}
where ${\gamma}_1 = \frac{{\displaystyle \beta {\lambda_J}^{2 - \alpha}}}{\displaystyle {(\overline{c})}^{2 + \alpha}} ,\quad {\gamma}_2 = \frac{\displaystyle 1}{\displaystyle {(\overline{c})}^{2(\alpha - 1)} {\lambda_J}^{2(\alpha - 1)} {\eta}^{1 - \alpha}}  \quad \textrm{and} \quad \lambda =  \frac{\displaystyle J_{bias}}{\displaystyle J_c}$
\section{ Finite Difference-Finite Element appprximation scheme} \label{S3}
In this section a numerical scheme is developed to find the approximate solution of the model \eqref{eq:18}. Finite difference method is used to discretize time variable  while space variable is discretized using finite element method.
\subsection{Spaces and Notations}
$\textit{L}^2(\,\Omega\,)$ denotes the space of square integrable  functions over the domain $\Omega = [-10 , 10].\, \textit{L}^2(\,\Omega\,)$ is equipped with following inner product and norm:
\begin{equation} \label{eq:19}
(\,f,g\,)_{\textit{L}^2(\,\Omega\,)} := \int_{\Omega}fg\;dz \quad \textrm{and} \quad ||\,\textit{f}\,||_{\textit{L}^2(\,\Omega\,)} := {\Bigg( \int_\Omega |\,\textit{f}\,|^2 \Bigg)}^\frac{1}{2} dz \quad f,g \in \textit{L}^2(\,\Omega\,) 
\end{equation}
In this article we will denote $(\,f,g\,)_{\textit{L}^2(\,\Omega\,)} \textrm{simply by} (\,f,g\,)$. We also define,
\begin{equation} \label{eq:20}
\langle\, f , g \,\rangle := \bigg(\,\frac{\partial f}{\partial z}\, ,\,\frac{\partial g}{\partial z}\,\bigg)
\end{equation}
$\textit{H\,}^m(\,\Omega\,)$ denotes Sobolev space of order $m > 0$ over the domain $\Omega = [-10 , 10].\, \textit{H\,}^m(\,\Omega\,)$ is equipped with following inner product and norm:
\begin{equation} \label{eq:21}
(\,f\,,\,g\,)_{{\textit{H}\,}^\textit{m}} := \sum_{i = 0}^m \bigg(\,\frac{d^i f}{dz^i}\, ,\,\frac{d^i g}{dz^i}\,\bigg)  \quad \textrm{and} \quad   ||\,\textit{f}\,||_{\textit{H}^m(\,\Omega\,)} :=  \sum_{i = 0}^m \bigg(\,\bigg|\bigg|\,\frac{d^i f}{dz^i}\,\bigg|\bigg|_{\textit{L}^2(\,\Omega\,)}^2\,\bigg)^{\frac{1}{2}} \quad f,g \in \textit{H\,}^m(\,\Omega\,)
\end{equation}

We define following differential operator:
\begin{equation} \label{eq:22}
\mathcal{L}_t^{\alpha}[\xi(.)](t) := \bigg({\displaystyle \gamma}_2\frac{\partial^{2\alpha}}{\partial t^{2\alpha}} + {\displaystyle \gamma}_1\frac{\partial^{\alpha}}{\partial t^{\alpha}}\bigg)[\xi(t)]
\end{equation}

\subsection{Finite Difference approximation}
Let $t_k = \tau k \quad \text{for} \quad k = 0,1,\cdots , m$ and  $\tau = \frac{\displaystyle T}{\displaystyle m} ,\, T > 0 $ , is the time step size. Consider the following approximations of first order time derivatives for $0 \leq k < m$

\begin{equation} \label{eq:23}
 \frac{\partial \varphi}{\partial t}(z,t) \approx  \frac{\partial \varphi}{\partial t}(z,t_{k+1}) \approx \frac{\varphi(z,t_{k+1}) - \varphi(z,t_k)}{\tau} \qquad \text{for}  \quad t_k \leq t \leq t_{k+1}   
\end{equation}

Initial conditions in \eqref{eq:18} yield,
\begin{equation} \label{eq:24}
   \varphi(z , t_0) \approx  4 \arctan\bigg(\exp{\bigg({\frac{ z - 0.1}{\sqrt{1 - c^2}}}\bigg)}\bigg)  \approx    \varphi (z , t_1) \
\end{equation}

Consider the following  approximation of second order time derivative  for $\,  0 \leq k < m-1$
\begin{equation} \label{eq:25}
 \frac{\partial^2 \varphi}{\partial t^2}(z,t) \approx  \frac{\partial^2 \varphi}{\partial t^2}(z,t_{k+2}) \approx \frac{\varphi(z,t_{k+2}) - 2\varphi(z,t_{k+1}) + \varphi(z,t_{k})}{\tau^2} \qquad \text{for}  \quad t_{k+1} \leq t \leq t_{k+2}   
\end{equation}
Fractional order time derivative $\partial_t^\alpha \varphi \,( 0 < \alpha < 1) \, $  can be  approximated using the scheme given by\cite{r9}-\cite{r10}. For $0 \leq k < m$, 
\begin{align} \label{eq:26}
       \frac{\partial^\alpha \varphi}{\partial t^\alpha}(z , t_{k+1}) & = \frac{1}{\Gamma(1 - \alpha)} \sum_{s=0}^{s=k}\int_{s\tau}^{(s+1)\tau}\frac{\partial \varphi(z,\omega)}{\partial \omega}\frac{1}{(t_{k+1}-\omega)^\alpha}\,d \omega \nonumber  \\
       &= \frac{1}{\Gamma(1 - \alpha)}\sum_{s=0}^{s=k}\frac{\varphi(z , t_{s+1}) - \varphi(z , t_{s})}{\tau}\int_{s\tau}^{(s+1)\tau}\frac{1}{(t_{k+1}-\omega)^\alpha}\,d \omega \nonumber\\
       &= C_\alpha\big[\varphi(z , t_{k+1}) - \varphi(z , t_{k})\big] + C_\alpha \zeta_k^\alpha[\varphi]  
\end{align}
\begin{align} \label{eq:27}
   \textrm{where} \quad \zeta_k^\alpha[\varphi] &= \sum_{p=1}^{p=k}\big[\varphi(z , t_{k-p+1}) - \varphi(z , t_{k-p})\big]\,b_p^\alpha \qquad \quad \textrm{for} \quad 0 \leq k < m  \\
  \zeta_0^\alpha[\varphi] := 0, \quad C_\alpha &= \frac{\tau^{-\alpha}}{\Gamma(2 - \alpha)} \quad \textrm{and}  \quad  b_p^\alpha =  \big[(p+1)^{1-\alpha} - p^{1-\alpha}\big]   \nonumber
\end{align}
\eqref{eq:24} implies that $\zeta_1^\alpha[\varphi] = 0\,.$ \par
Similarly fractional order time derivative $\partial_t^{2\alpha} \varphi \,( 0 < \alpha < 1) \, $ can be approximated again using the scheme given in \cite{r9}-\cite{r10}. So for $0 \leq k < m-1$, 
 \begin{align} 
    \frac{\partial^{2\alpha} \varphi}{\partial t^{2\alpha}}(z , t_{k+2}) &= C_{2\alpha} \big[\varphi(z , t_{k+2}) - 2\varphi(z , t_{k+1}) + \varphi(z , t_{k})\big] + C_{2\alpha} \zeta_k^{2\alpha}[\varphi] \label{eq:28}  \\
\textrm{where} \quad \zeta_k^{2\alpha}[\varphi] &= \sum_{p=1}^{p=k}\big[\varphi(z , t_{k-p+2}) - 2\varphi(z , t_{k-p+1})+ \varphi(z , t_{k-p})\big]\,b_p^{2\alpha} \quad  0 \leq k < m-1  \label{eq:29} \\
 \zeta_0^{2\alpha}[\varphi] := 0, \quad C_{2\alpha} &= \frac{\tau^{-2\alpha}}{\Gamma(3 - 2\alpha)} \quad \textrm{and} \quad b_p^{2\alpha} =  \big[(p+1)^{2(1-\alpha)} - p^{2(1-\alpha)}\big]  \nonumber
 \end{align}

Using \eqref{eq:26} and \eqref{eq:28} in \eqref{eq:22} $\mathcal{L}_t^{\alpha}[\varphi](t)$ can be approximated as (for $ 0 \leq k < m-1$):
 \begin{align}
   \mathcal{L}_t^{\alpha}[\varphi](t_{k+2}) &= \bigg({\displaystyle \gamma}_1\frac{\partial^{2\alpha}}{\partial t^{2\alpha}} + {\displaystyle \gamma}_1\frac{\partial^{\alpha}}{\partial t^{\alpha}}\bigg)[\varphi](t_{k+2}) \nonumber \\
   &\approx {\displaystyle \gamma}_1 C_\alpha \big[\varphi(t_{k+2}) - \varphi(t_{k+1})\big] + {\displaystyle \gamma}_1 C_\alpha \zeta_k^\alpha[\varphi] 
   +   {\displaystyle \gamma}_2 C_{2\alpha} \big[\varphi(t_{k+2}) - 2\varphi(t_{k+1}) + \varphi(t_{k})\big] \label{eq:30} \\
   &+ {\displaystyle \gamma}_2 C_{2\alpha} \zeta_k^{2\alpha}[\varphi] \nonumber
 \end{align}
 
\subsection{Finite Element Discretization}
Form a partition of the domain $\Omega = (-10,10)$ with $n$ sub-domains $\Omega_i =  (z_{i-1} \,, \,z_{i}) \quad \text{for} \quad i = 1,\cdots , n$ such that
$$-10 = z_0 < z_1 < \cdots < z_n=10$$
 $$\textrm{i.e.} \bigcup_{i=1}^n \overline{\Omega_i} = \overline{\Omega} \quad \textrm{and} \quad \Omega_i \bigcap \Omega_j = \emptyset \quad \forall i,j = 1,2,\cdots , n  \quad \textrm{and} \quad i\neq j$$
We consider sub-domains $\Omega_i$ of equal length, $\,h = \frac{20}{\displaystyle n}\,$
We define a finite dimensional subspace $V_h^1(\Omega)$ of $H^1(\Omega)$ as follows:
 $$ V_h^1(\Omega) = \{v_h \in  H^1(\Omega) : v_h\big |_{\Omega_i} \in \mathbb{P}_r(\Omega_i) ,\, \forall \, i=1,\cdots,n   \}$$
 where $\,\mathbb{P}_r(\Omega_i)\,$ represents space of Lagrange Polynomials of degree at most r over each sub-domain $\Omega_i \quad \forall \, i = 1,\cdots , n$.\par
$\textbf{Weak Formulation}$. Find $\; \varphi \in C^1\big([0,T];H^1(\Omega)\big)\,$ such that: \cite{r5}
\begin{equation} \label{eq:31}  
        \begin{cases} \displaystyle
            \displaystyle \langle \displaystyle\psi,                  \displaystyle \varphi  \displaystyle \rangle +  \displaystyle \mathcal{L}_t^{\alpha}( \displaystyle \psi, \displaystyle \varphi) + ( \displaystyle \psi, \displaystyle \sin{ \displaystyle \varphi}) = ( \displaystyle \psi, \displaystyle \lambda) - A \displaystyle \psi(-10) + B \displaystyle \psi(10) \\
   \displaystyle \varphi(z,0) = 4  \displaystyle \arctan\bigg( \displaystyle \exp{\bigg({\frac{ z - 0.1}{ \displaystyle \sqrt{1 - c^2}}}\bigg)}\bigg) \qquad  
\frac{ \displaystyle \partial  \displaystyle \varphi}{ \displaystyle \partial  \displaystyle t}(z , 0) = 0 
        \end{cases}
\end{equation}
for all $\,,\psi \in H^1(\Omega)$ \par
Let $\; \varphi_h$ represents an  approximate solution of $\varphi$ in $C^1\big([0,T];V_h^1(\Omega)\big)\,$\par
$\textbf{Discrete Weak Formulation}$. Find $\; \varphi_h \in C^1\big([0,T];V_h^1(\Omega)\big)\,$ such that:  \cite{r5}
 \begin{equation} \label{eq:32}  
        \begin{cases} \displaystyle 
     \displaystyle \langle  \displaystyle \psi_h, \displaystyle \varphi_h  \displaystyle \rangle +  \displaystyle \mathcal{L}_t^{\alpha}( \displaystyle \psi_h, \displaystyle \varphi_h)  + ( \displaystyle \psi_h, \displaystyle \sin{ \displaystyle \varphi_h}) = ( \displaystyle \psi_h, \displaystyle \lambda) - A \displaystyle \psi_h(-10) + B \displaystyle \psi_h(10)\\
   \displaystyle \varphi_h(z,0) = 4  \displaystyle \arctan\bigg( \displaystyle \exp{\bigg({\frac{ z - 0.1}{ \displaystyle \sqrt{1 - c^2}}}\bigg)}\bigg) \quad  
\frac{ \displaystyle \partial  \displaystyle \varphi_h}{ \displaystyle \partial  \displaystyle t}(z , 0) = 0 
            \end{cases}
 \end{equation} 
for all $\,\psi_h \in V_h^1(\Omega)$.\par
Using the approximations of $\mathcal{L}_t^{\alpha}[\varphi](t)$  from \eqref{eq:30} in \eqref{eq:32} for a particular time $t_{k+2} \,(0\leq k < m-1)$ the discrete weak formulation takes the following form: \par
Find $\varphi_h(y,t_{k+2}) \in V_h^1(\Omega)\,$ such that  \cite{r5}:
 \begin{equation} \label{eq:33} 
        \begin{cases} \displaystyle
     \displaystyle \langle  \displaystyle \psi_h, \displaystyle \varphi_h(z,t_{k+2})  \displaystyle \rangle +  \displaystyle \mathcal{L}_{k+2}^{\alpha}( \displaystyle \psi_h, \displaystyle \varphi_h(z,t_{k+2}))  + ( \displaystyle \psi_h, \displaystyle \sin{ \displaystyle \varphi_h(z,t_{k+2})}) \\
    = ( \displaystyle \psi_h, \displaystyle \lambda) - A \displaystyle \psi_h(-10) + B \displaystyle \psi_h(10) \\
      \varphi_h(z , t_0) = 4 \arctan\bigg(\exp{\bigg({\displaystyle \frac{ z - 0.1}{ \displaystyle \sqrt{1 - c^2}}}\bigg)}\bigg)  =    \varphi_h (z , t_1)
            \end{cases}
 \end{equation}
for all $\,\psi_h \in V_h^1(\Omega)$.
Here $\mathcal{L}_{k+2}^{\alpha}( \displaystyle \psi_h, \displaystyle \varphi_h(z,t_{k+2}))$ represents $\mathcal{L}_{t}^{\alpha}[ (\displaystyle \psi_h, \displaystyle \varphi_h)](t_{k+1})$.\par
$\,\varphi_h(z,t)\,$ is defined as follows:
\begin{equation} \label{eq:34} 
    \varphi_h(z,t) = \sum_{p=1}^{D_h} \varphi_p^h(t) \phi_p^h (z) \quad \textrm{where} \quad z \in {\overline{\Omega}} \quad \varphi_p^h(t) = \varphi_h(z_p,t) 
\end{equation}
where $\{ \phi_p^h(z) \big | p=1,\cdots,n\}$ forms a basis of $V_h^1$ and $D_h = \textrm{dim}(V_h^1) $. By choosing $\psi_h$ as $\phi_p^h$ for different values of $ p=1,\cdots,D_h $ in \eqref{eq:33}, following system of non-linear equations is obtained for each $0 \leq k < m-1$ :
\begin{equation} \label{eq:35} 
        \begin{cases} \displaystyle
    \displaystyle \mathbb{A}^h[\displaystyle \varphi_{k+2}^h] + \displaystyle \mathbb{M}^h  \displaystyle \mathcal{L}_{k+2}([ \displaystyle \varphi_{k+2}^h]) +  \displaystyle \mathbb{G}^h( \displaystyle \varphi_{k+2}^h) =  \displaystyle  \mathbb{F}^h\\
    [ \displaystyle \varphi_0^h] = 4 \displaystyle  \arctan\bigg( \displaystyle \exp{\bigg({\frac{ z - 0.1}{ \displaystyle \sqrt{1 - c^2}}}\bigg)}\bigg) = [ \displaystyle \varphi_1^h]
         \end{cases}
 \end{equation}
Where $\, \forall \, p,q = 1,\cdots,D_h$,
\begin{eqnarray*}
  (\mathbb{A}^h)_{pq} &=& (\phi_p^h , \phi_q^h)  = 
    \frac{1}{h} \begin{pmatrix}  
   1 & -1\\
   -1 & 1 
  \end{pmatrix}\\
  (\mathbb{M}^h)_{pq}  &=& \langle  \phi_p^h , \phi_q^h  \rangle  = \frac{h}{6} \begin{pmatrix}
   2 & \, 1\\
   1 & \,2 
  \end{pmatrix}\\
  (\mathbb{G}^h(\varphi_{k+1}^h))_p &=& ( \sin(\varphi^h),\phi_p^h)  = \frac{h}{\varphi_{i+1}^h - \varphi_{i}^h}\begin{pmatrix}
  \cos(\varphi_i^h) - \frac{\displaystyle \sin(\varphi_{i+1}^h)}{\displaystyle \varphi_{i+1}^h + \varphi_{i}^h} + \frac{\displaystyle \sin(\varphi_{i}^h)}{\displaystyle \varphi_{i+1}^h - \varphi_{i}^h} 
  \\
   -\cos(\varphi_{i+1}^h) + \frac{\displaystyle \sin(\varphi_{i+1}^h)}{\displaystyle \varphi_{i+1}^h - \varphi_{i}^h} - \frac{\displaystyle \sin(\varphi_{i}^h)}{\displaystyle \varphi_{i+1}^h - \varphi_{i}^h} 
  \end{pmatrix}\\
 (\mathbb{F}^h)_p  &=& (\lambda , \phi_p^h) = \frac{\lambda h}{2}\begin{pmatrix}
   1\\
   1
  \end{pmatrix} \quad \textrm{for} \quad \textrm{for} \quad 1 < p < D_h\\
   (\mathbb{F}^h)_1  &=&  \begin{pmatrix}
   \frac{\displaystyle \lambda h}{\displaystyle 2} - A \\
   \frac{\displaystyle \lambda h}{\displaystyle 2} 
  \end{pmatrix} \quad \textrm{for} \quad p = 1 \\
     (\mathbb{F}^h)_{D_h}  &=& \begin{pmatrix}
   \frac{\displaystyle \lambda h}{\displaystyle 2} \\
   \frac{\displaystyle \lambda h}{\displaystyle 2}  + B
  \end{pmatrix} \quad \textrm{for} \quad p = D_h \\
\end{eqnarray*} 
 A code has been developed in MatLab R2015a to solve the system \eqref{eq:35} 
\section{Error Analysis and Scheme convergence} \label{S4}
In this section, the convergence and the numerical error analysis of the scheme developed in previous section are discussed. The theoretical predication for the error is then compared with the one obtained using the numerical scheme. The two are found in good agreement with each other thus validating the numerical scheme developed in previous section. The theoretical error estimates of the problems of type under consideration in this article is demonstrated by following theorem .
\subsection{Theoretical Error Predictions}
\begin{theorem}
If $\;\varphi(z,t)\;$ is exact solution of \eqref{eq:18} and $\;\varphi_h(z,t_{k+1})\; $ is solution of the problem \eqref{eq:35}, which is approximate solution of \eqref{eq:18} at time step $ t_{k+1}$, then there exist a constant $C > 0$, independent of space step size h and time step size $\tau$ such that:\\
$
\big\|\varphi(z,t_{k+1}) - \varphi_h(z,t_{k+1})\big\|_{L^2(\Omega)} \leq C(h^{r+1} + \tau^2)\qquad  0 \leq k < m \\
\big\|\varphi(z,t_{k+1}) - \varphi_h(z,t_{k+1})\big\|_{H^1(\Omega)} \leq C(h^{r} + \tau^2)\qquad \quad 0 \leq k < m \\
\textrm{where r is the degree of the Lagrange polynomials used as the basis for} \, V^h(\Omega)$
\end{theorem}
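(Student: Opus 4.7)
My plan is to follow the classical Ritz-projection argument adapted to the fractional L1-type scheme used here. I would introduce the elliptic projection $R_h:H^1(\Omega)\to V_h^1(\Omega)$ defined (up to a trivial modification fixing uniqueness, e.g.\ by pinning the mean) by $\langle R_h v - v,\psi_h\rangle = 0$ for all $\psi_h\in V_h^1(\Omega)$, and split the error at each time level as
\begin{equation*}
\varphi(\cdot,t_{k+1})-\varphi_h(\cdot,t_{k+1}) \;=\; \rho_{k+1} + \theta_{k+1}, \qquad \rho_{k+1}:=\varphi(t_{k+1})-R_h\varphi(t_{k+1}), \quad \theta_{k+1}\in V_h^1(\Omega).
\end{equation*}
Classical Bramble--Hilbert estimates for $R_h$ yield $\|\rho_{k+1}\|_{L^2}\le Ch^{r+1}\|\varphi(t_{k+1})\|_{H^{r+1}}$ and $\|\rho_{k+1}\|_{H^1}\le Ch^{r}\|\varphi(t_{k+1})\|_{H^{r+1}}$, provided $\varphi$ has sufficient spatial regularity uniformly in time. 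The stated $L^2$ and $H^1$ bounds then reduce, via the triangle inequality, to proving $\|\theta_{k+1}\|_{L^2}+h\,\|\nabla\theta_{k+1}\|_{L^2}\le C(h^{r+1}+\tau^2)$.

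\textbf{Error equation and truncation.} Subtracting \eqref{eq:33} from the continuous weak formulation \eqref{eq:31} evaluated at $t=t_{k+2}$, and invoking the orthogonality of $R_h$ in $\langle\cdot,\cdot\rangle$, I obtain for every $\psi_h\in V_h^1(\Omega)$ an equation of the form
\begin{equation*}
\mathcal{L}_{k+2}^{\alpha}(\psi_h,\theta_{k+2}) \;+\; (\psi_h,\sin(R_h\varphi(t_{k+2}))-\sin(\varphi_h(t_{k+2}))) \;=\; (\psi_h,T^{\mathrm{tr}}_{k+2}+S^{\rho}_{k+2}),
\end{equation*}
where $T^{\mathrm{tr}}_{k+2}$ is the local truncation error of the L1-type approximations \eqref{eq:26} and \eqref{eq:28} for $\partial_t^{\alpha}\varphi$ and $\partial_t^{2\alpha}\varphi$, and $S^{\rho}_{k+2}$ collects the contributions of applying these fractional operators to $\rho$. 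A Taylor expansion combined with an Abel-type summation, along the lines of \cite{r9,r10}, bounds $T^{\mathrm{tr}}_{k+2}$ by $C\tau^2\max_j\|\partial_t^{m}\varphi(t_j)\|$ for a suitable integer $m$, and $\|S^{\rho}_{k+2}\|_{L^2}$ inherits the $Ch^{r+1}$ rate from the projection estimate.

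\textbf{Discrete energy estimate.} Testing the error equation with $\psi_h=\theta_{k+2}-\theta_{k+1}$, I would exploit the positive-semidefiniteness of the discrete memory kernels determined by the coefficients $b_p^{\alpha}$ and $b_p^{2\alpha}$ in \eqref{eq:27} and \eqref{eq:29}; this is the standard device that renders L1 schemes unconditionally stable. The Lipschitz bound $|\sin a-\sin b|\le|a-b|$ handles the nonlinear term, and Young's inequality absorbs the cross contribution at the cost of a Gronwall factor. A discrete Gronwall inequality tailored to L1 memory coefficients then yields
\begin{equation*}
\|\theta_{k+2}\|_{L^2}^2 \;+\; \tau\sum_{j=0}^{k+1}\|\nabla\theta_{j+2}\|_{L^2}^2 \;\le\; C\bigl(h^{2(r+1)}+\tau^{4}\bigr),
\end{equation*}
uniformly in $k$. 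Combining this with the Ritz estimates for $\rho$ through the triangle inequality delivers both bounds in the theorem.

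\textbf{Main obstacle.} The most delicate point is that the basic L1-type quadrature \eqref{eq:26} is only $O(\tau^{2-\alpha})$-consistent in general, whereas the theorem claims a clean $O(\tau^2)$ temporal rate. Closing the argument at this sharper rate will require using the compatibility $\zeta_1^{\alpha}[\varphi]=0$ coming from \eqref{eq:24} (and, implicitly, from the initial condition $\partial_t\varphi(z,0)=0$) to cancel the leading low-regularity term near $t=0$, and then propagating this cancellation through the composite operator $\mathcal{L}_t^{\alpha}$, which superposes two Caputo derivatives of orders $\alpha$ and $2\alpha$. Verifying that the cancellations from both fractional pieces survive the discrete Gronwall step, and that the nonlinear $\sin(\varphi)$ term does not destroy the sharp time rate, is in my view the technical heart of the proof.
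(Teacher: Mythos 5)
Your overall strategy -- Ritz/elliptic projection splitting, truncation analysis of the L1-type quadratures \eqref{eq:26} and \eqref{eq:28}, positive-definiteness of the memory weights $b_p^{\alpha}$, $b_p^{2\alpha}$, Lipschitz treatment of $\sin\varphi$, and a discrete Gronwall step -- is exactly the argument the paper delegates to \cite{r9} and \cite{r10}; the paper itself offers no details beyond that citation, so in spirit you are reconstructing its intended proof rather than departing from it. The minor technical caveats you note (mean-fixing for the Neumann Ritz projection, choice of test function $\theta_{k+2}-\theta_{k+1}$ for the second-difference-in-time part) are handled in the standard way and are not the issue.

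The genuine gap is precisely the point you flag in your last paragraph, and your proposed remedy does not close it. The $O(\tau^{2})$ temporal rate claimed in the theorem cannot be extracted from the L1-type discretizations used here: the piecewise-linear-in-time quadrature under the weakly singular kernel commits an $O(\tau^{2-\alpha})$ consistency error for the $\partial_t^{\alpha}$ term and an $O(\tau^{3-2\alpha})$ error for the $\partial_t^{2\alpha}$ term \emph{at every time step}, not only near $t=0$; the compatibility $\zeta_1^{\alpha}[\varphi]=0$ from \eqref{eq:24} (and $\partial_t\varphi(z,0)=0$) only removes the first-step contribution and cannot raise the global order. Indeed, the very references the paper cites prove rates of the form $O(\tau^{2-\alpha})$ (Lin--Xu), so the argument you sketch can only deliver $C\bigl(h^{r+1}+\tau^{\min(2-\alpha,\,3-2\alpha)}\bigr)$ in $L^2$ and the analogous $H^1$ bound; obtaining a clean $\tau^{2}$ would require a genuinely higher-order time discretization (e.g.\ an L1-2 or Alikhanov-type $L2$-$1_{\sigma}$ formula) together with extra temporal regularity of $\varphi$, neither of which is present in the scheme \eqref{eq:35}. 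So either the theorem's $\tau^{2}$ must be weakened to the $\alpha$-dependent rate, or the scheme must be changed; as written, the final Gronwall step of your proposal would be asked to absorb a truncation term that is simply not of size $\tau^{2}$.
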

\begin{proof}
    The proof of above error estimates can be derived by using similar reasonong given in \cite{r9}- \cite{r10}. 
\end{proof}
For the model under consideration in this article, linear Lagrange polynomials are used as the basis functions of the space $V_h^1(\Omega)$, so here r = 1.
\begin{figure}[!h]
\begin{center}
\subfigure[$L_2(\Omega)$ and $H^1(\Omega)$ error
     curves]
{
\includegraphics[width = .48\textwidth, height = .42\textwidth]{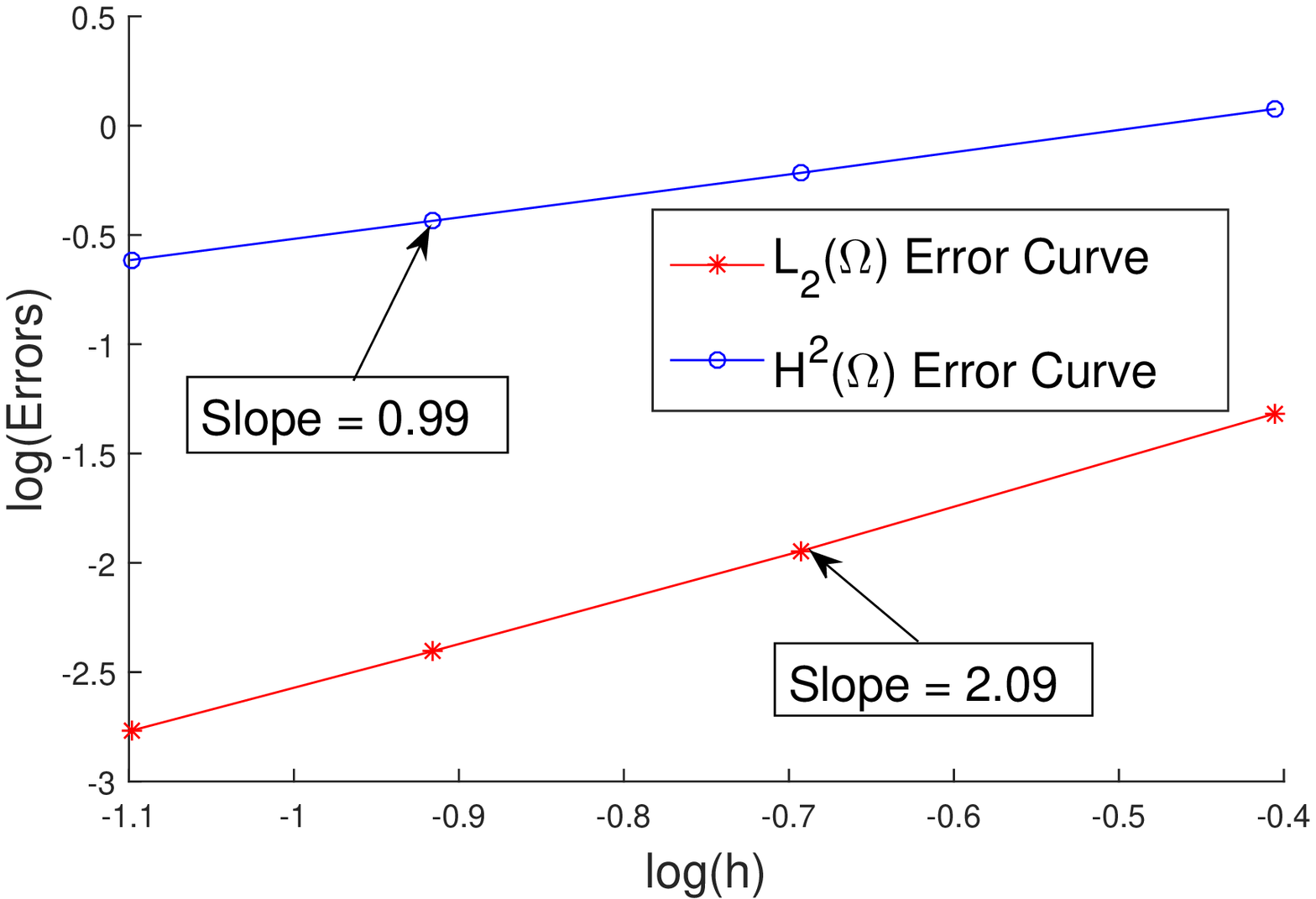}
\label{fig:6.1(a)} }
\subfigure[Fabricated and approximate solution
    curves]
{
\includegraphics[width = .48\textwidth, height = .42\textwidth]{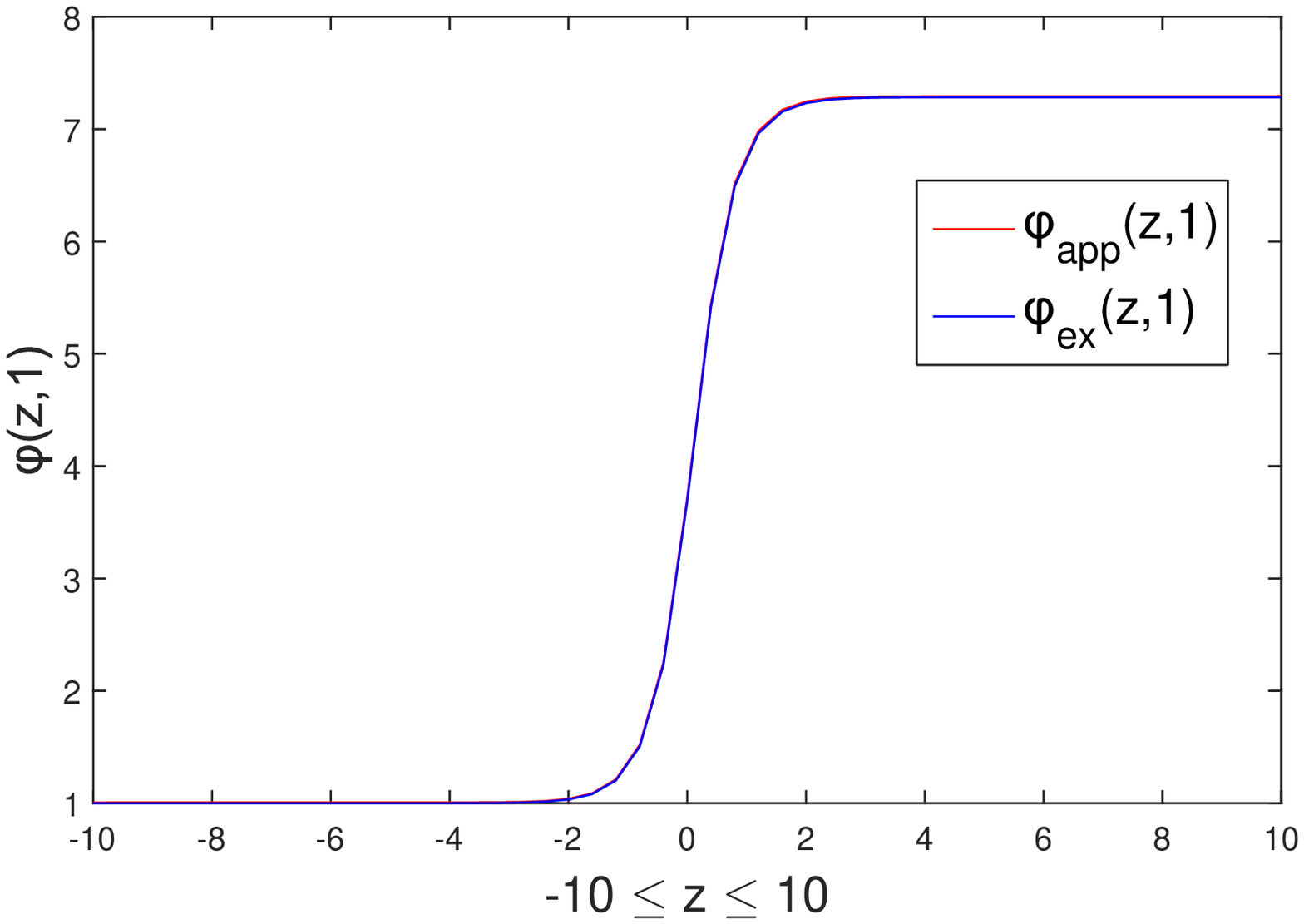}
\label{fig:6.1(b)}}
\caption{Validation of Numerical Scheme}
 \label{fig:6.1}
\end{center}
\end{figure}
\begin{figure}
\begin{center}
\subfigure[Surface plot of approximate solution]
{
\includegraphics[width = .48\textwidth, height = .42\textwidth]{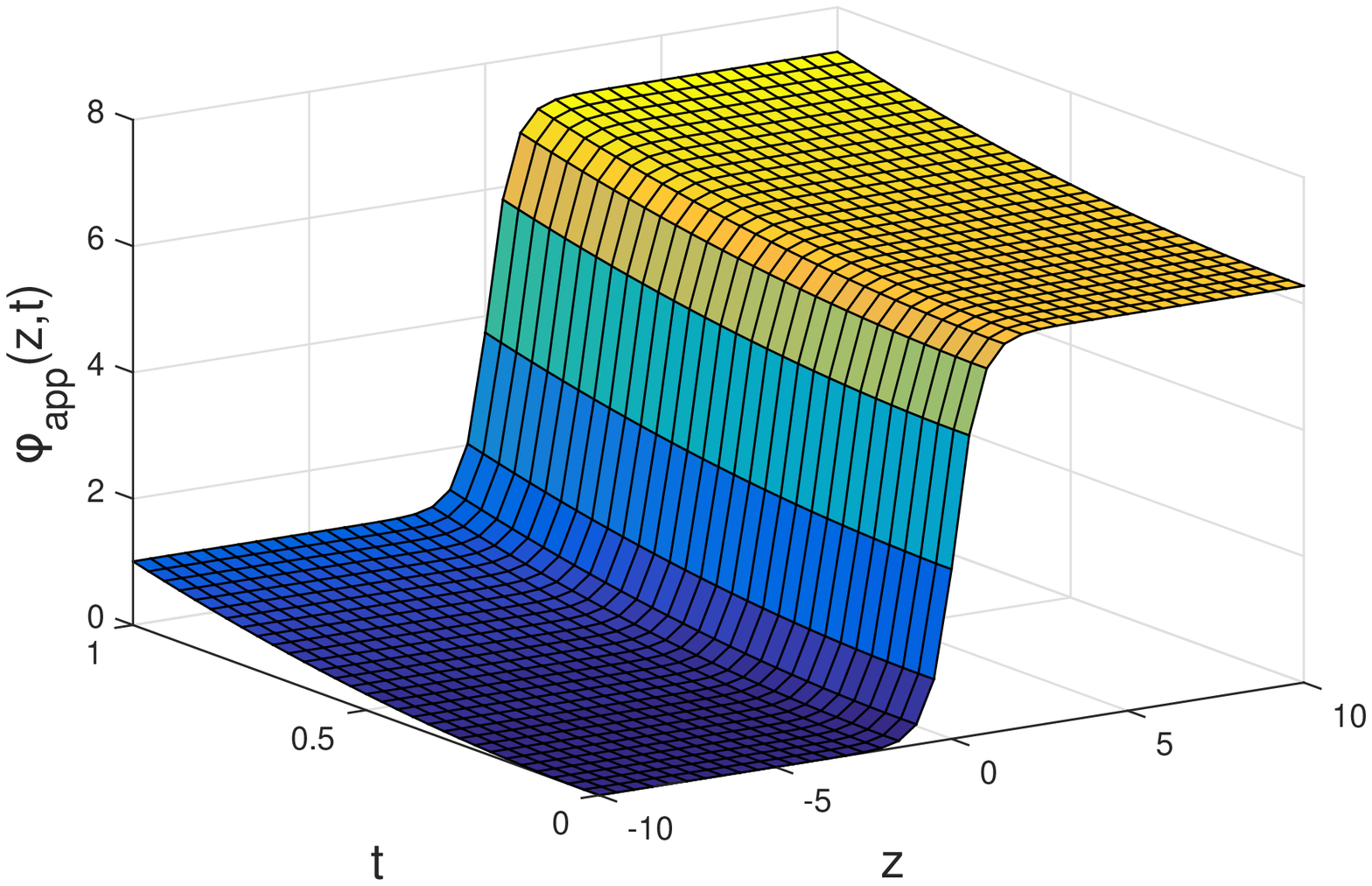}
\label{fig:6.2(a)} }
\subfigure[Surface plot of fabricated solution]
{
\includegraphics[width = .48\textwidth, height = .42\textwidth]{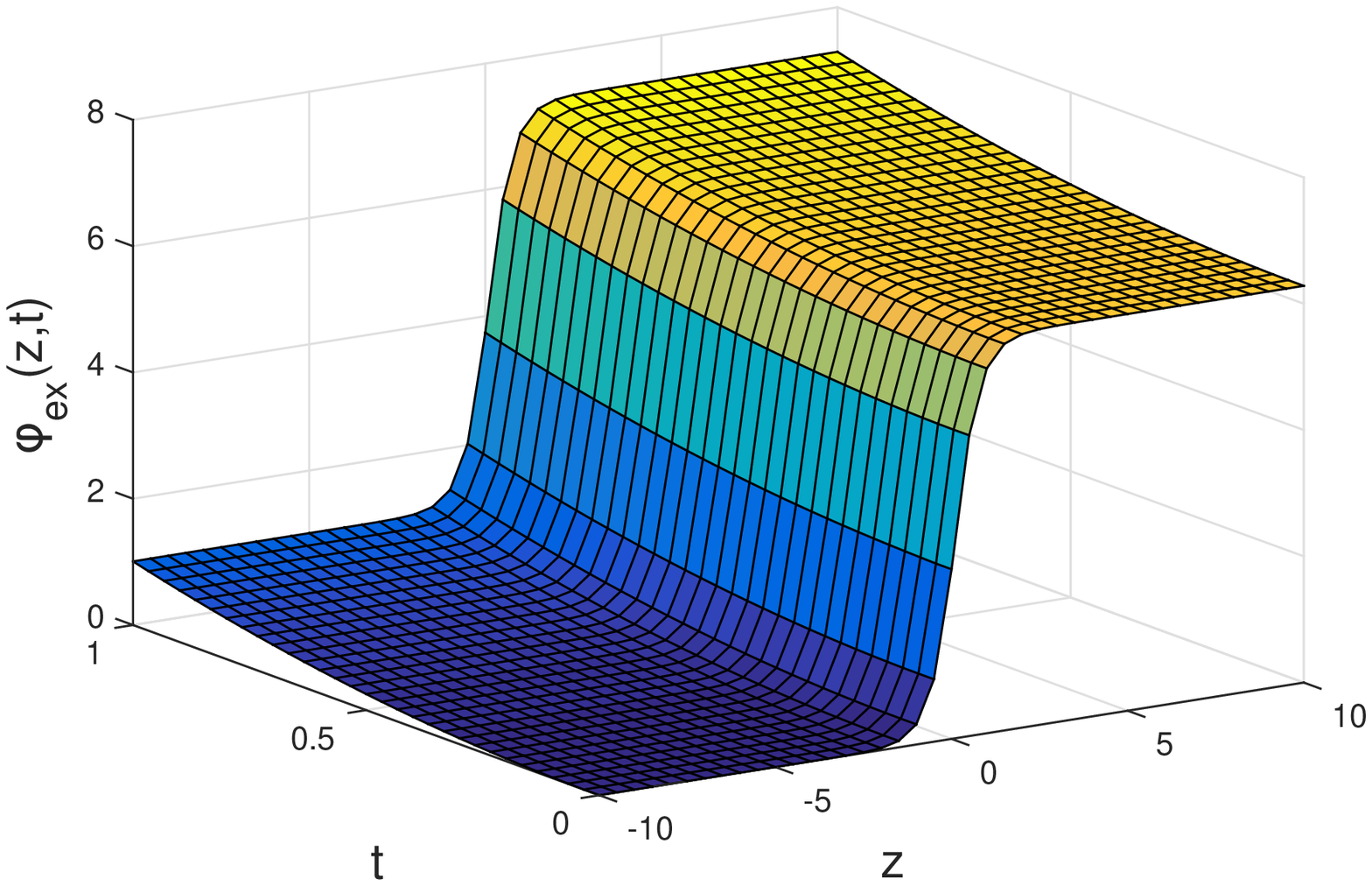}
\label{fig:6.2(b)}}
\caption{Surface plots of approximate and fabricated solution}
 \label{fig:6.2}
\end{center}
\end{figure}
\subsection{Numerical Error Estimates and Scheme validation}

To compare the theoretical and numerical error estimates of the scheme developed in the previous section, an exact solution of the model \eqref{eq:18} is fabricated by adding an artificial term $\,F_{art}(z,t)\,$ on the left side of \eqref{eq:18} \cite{r5}. So we get:
\begin{equation} \label{eq:36}
    \begin{cases} \displaystyle 
    -{\frac{ \displaystyle \partial^2 \displaystyle \varphi}{ \displaystyle \partial  \displaystyle z^2}} + {\displaystyle \gamma}_2 \frac{ \displaystyle \partial^{2 \alpha} \displaystyle \varphi}{{ \displaystyle \partial  \displaystyle t}^{2 \alpha}} + {\displaystyle \gamma}_1\frac{ \displaystyle \partial^{  \alpha} \displaystyle \varphi}{{ \displaystyle \partial  \displaystyle t}^{  \alpha}} +  \displaystyle \sin \displaystyle \varphi =  \displaystyle \lambda +  \displaystyle F_{art}(z,t)  \\
  \frac{ \displaystyle \partial \displaystyle \varphi}{ \displaystyle \partial  \displaystyle z}(-10 ,  \displaystyle t ) = A \qquad \qquad \frac{ \displaystyle \partial \displaystyle \varphi}{ \displaystyle \partial z}(10, t ) = B \\
   \displaystyle \varphi(z,0) = 4  \displaystyle \arctan\bigg( \displaystyle \exp{\bigg({\frac{ z - 0.1}{ \displaystyle \sqrt{1 - c^2}}}\bigg)}\bigg) \qquad  
\frac{ \displaystyle \partial  \displaystyle \varphi}{ \displaystyle \partial  \displaystyle t}(z , 0) = 0
    \end{cases}
\end{equation}
The fabricated exact solution $\,\varphi_{ex}(z,t)\,$  should be smooth and satisfy the IC's and BC's of \eqref{eq:18}. We choose $\,\varphi_{ex}(z,t)\,$  to be: 
\begin{equation*}
\,\varphi_{ex}(z,t) = 4\arctan\bigg(\exp{\bigg({\frac{ z - 0.1}{\sqrt{1 - c^2}}}\bigg)}\bigg) + t^2\,
\end{equation*}
Substituting $\,\varphi_{ex}(z,t)\,$ in \eqref{eq:36} and then solving for $\,F_{art}(z,t)\,$, we get:
{ \small
\begin{align} 
F_{art}(z,t) &= \displaystyle \sin{\bigg(4\arctan \bigg(\exp{\bigg (\frac{x - 0.1}{\sqrt{1 - c^2}}\bigg)}\bigg)+ t^2\bigg )} - 4\Bigg(\frac{ \displaystyle \exp{\bigg(\frac{ \displaystyle x - 0.1}{ \displaystyle \sqrt{1 - c^2}}\bigg)}}{(1 - c^2)\bigg(\exp{(\frac{ \displaystyle (2(x - 0.1))}{ \displaystyle \sqrt{(1 - c^2)}}}\big) + 1\bigg)} \nonumber\\
 &- \frac{2\exp{\bigg (\frac{ \displaystyle 3(x - 0.1)}{ \displaystyle \sqrt{1 - c^2}}\bigg)}}{(1-c^2)\bigg( \displaystyle \exp{\bigg (\frac{ \displaystyle 2(x - 0.1)}{ \displaystyle \sqrt{1 - c^2}}\bigg)} + 1\bigg)^2}\Bigg) + \frac{2 \displaystyle {\displaystyle \gamma}_1  \displaystyle t^{  (2-  \alpha)}}{ \displaystyle \Gamma(3 -   \alpha)} + \frac{2 {\displaystyle \gamma}_2 \displaystyle t^{ (2-2  \alpha)}}{ \displaystyle \Gamma(3 - 2  \alpha)} \label{eq:37}
\end{align}
}

Approximate solution of \eqref{eq:36} is then obtained using the same numerical scheme discussed in the previous section. Errors in the approximate solution of \eqref{eq:36} are calculated in $\,L^2(\Omega)\,$ and $\,H^1(\Omega)\,$ norms using four different values of space step size \textit{h}. These errors are then converted to logarithmic scale and then plotted against $\,\log(h) \,$ in Fig. \cref{fig:6.1(a)}. The slopes of the $\,L^2(\Omega)\,$  and $\,H^1(\Omega)\,$ error curves turns out be 2.02 and 0.98 respectively. According to theoretical predictions given by above mentioned theorem, the slopes of these two error curves should be (r+1) and r respectively\cite{r5}. Since here linear polynomials are used as basis functions , so the slopes should be 2 and 1 respectively. So the theoretical error estimate agrees well with the numerical one which  demonstrates that the numerical scheme developed in the previous section is convergent with the theoretically predicted convergent rates.The approximate and exact solutions of \eqref{eq:36} for the final time (t = 1)  are plotted in Fig. \cref{fig:6.1(b)} . The two curves agrees very well. Also the surface plots of the approximate and exact solutions plotted in Fig. \cref{fig:6.2}, agrees very well. All these computations validates the numerical scheme developed in the previous section.

\section{Simulations and discussions} \label{S5}
In this section the effects of various non-dimensional parameters $\,\lambda \,$ ,$\,{\gamma}_1\, $,$\,{\gamma}_2\, $ and $\,\alpha\,$ on phase difference $\,\varphi\,$ , supper current density $\,J_s\,$ and voltage $\,V\,$ are analyzed in detail with graphical representations. The effects of various parameters on the quantities of interest discussed above are plotted for the final time $\,t = 1\,$ in subsequent figures. In Fig. \cref{fig:7.1} the initial phase difference (scaled by 2$\pi$) and initial supper current density (normalized by maximum Josephson current density $\,(J_{c})\,$) are 
displayed.
In Fig. \cref{fig:7.2} the effect of fractional exponent $\,\alpha\,$ on the phase difference, super current density and voltage are plotted. It is observed that with increase in $\,\alpha\,$ phase difference, super current density and voltage decreases.
Effect of parameter $\,{\gamma}_1\,$ on  phase difference, super current density and voltage is displayed in Fig. \cref{fig:7.3} . The parameter $\,{\gamma}_1\,$ is related to the internal properties of the junction and its structure, like conductivity, capacitance, dielectric constant, London penetration depth, width etc. It is inspected that phase difference , voltage and super current density decrease as $\,\gamma\,$ increases. Since ${\gamma}_1$ is directly related to conductivity or equivalently inversely to resistance, so increase in ${\gamma}_1$  means increase in conductivity or decrease in resistance of the junction.So voltage will decrease by $V = IR$. 
In Fig. \cref{fig:7.4} phase difference, super current density and voltage are plotted for different values of the parameter ${\gamma}_2$. The parameter ${\gamma}_2$ was got introduced due to the use of fractional time derivative. So it is observed that the phase difference, super current density and voltage decrease with increase in ${\gamma}_2$.
Similarly in Fig. \cref{fig:7.5} phase difference, super current density and voltage are plotted for different values of the parameter $\,\lambda\,$. The parameter $\,\lambda\,$ is related to the physical bias current flowing through the junction and the maximum Josephson current density. It is noticed that  phase difference, super current density and voltage all increase as $\,\lambda\,$ increases.  Since $\lambda$ is directly related to the bias current, so increase in $\lambda$ means increase in bias current. Hence voltage will increase as $V = IR$.
In Fig. \cref{fig:7.6} the evolution of phase difference over the time interval [0,1] is displaced for two different sets of parameter values.
\begin{figure}[!h]
\begin{center}
\subfigure[Initial Phase difference]
{
\includegraphics[width = .45\textwidth, height = .38\textwidth]{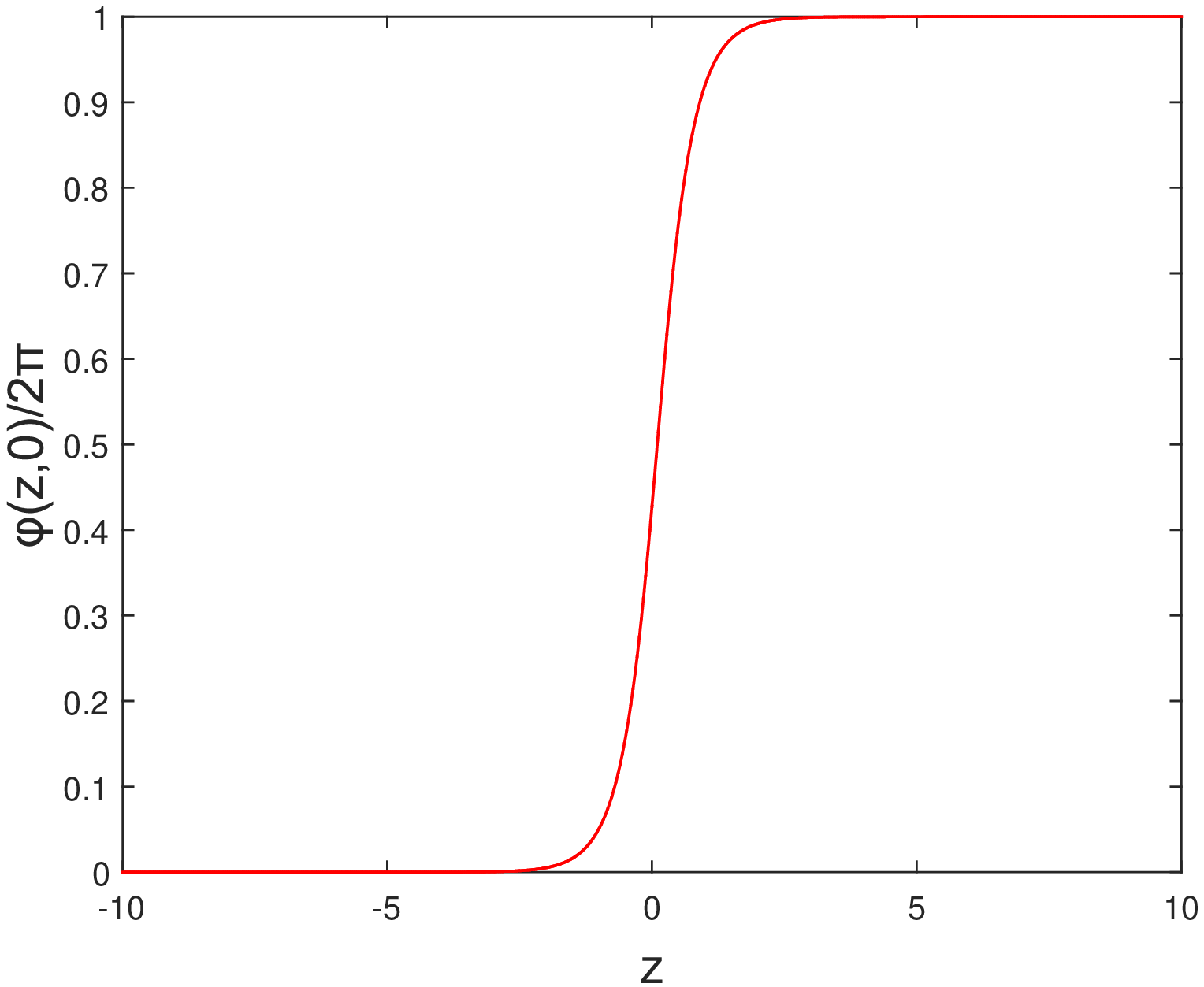}
\label{fig:7.1(a)} }
\subfigure[Initial Josephson current density]
{
\includegraphics[width = .45\textwidth, height = .38\textwidth]{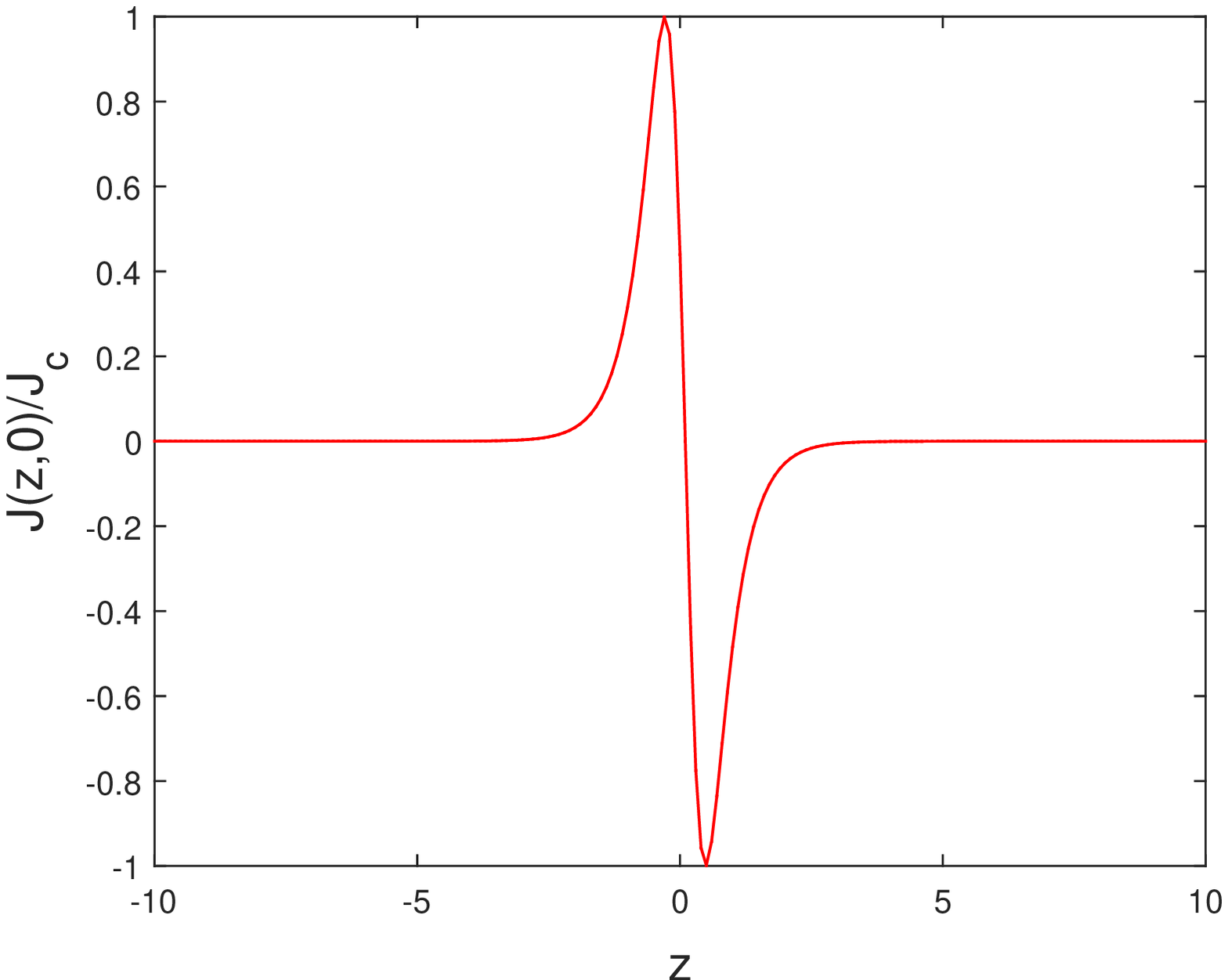}
\label{fig:7.1(b)}}
\caption{Initial Phase difference and Josephson current density}
 \label{fig:7.1}
\end{center}
\begin{center}
\subfigure[Variation of Phase difference
    with $\alpha$]
{
\includegraphics[width = .45\textwidth, height = .38\textwidth]{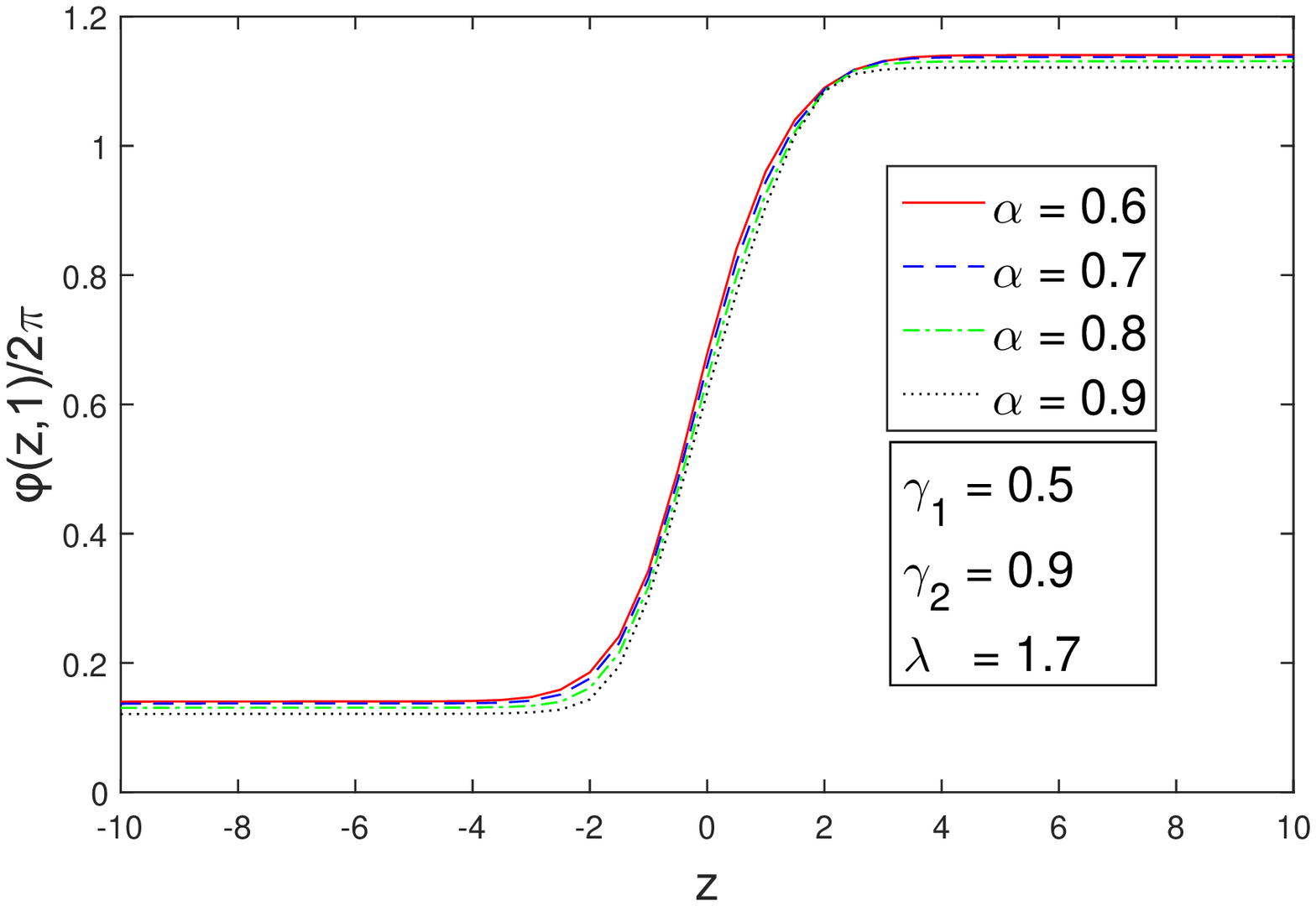}
\label{fig:7.2(a)} }
\subfigure[Variation of Josephson current density with $\alpha$]
{
\includegraphics[width = .45\textwidth, height = .38\textwidth]{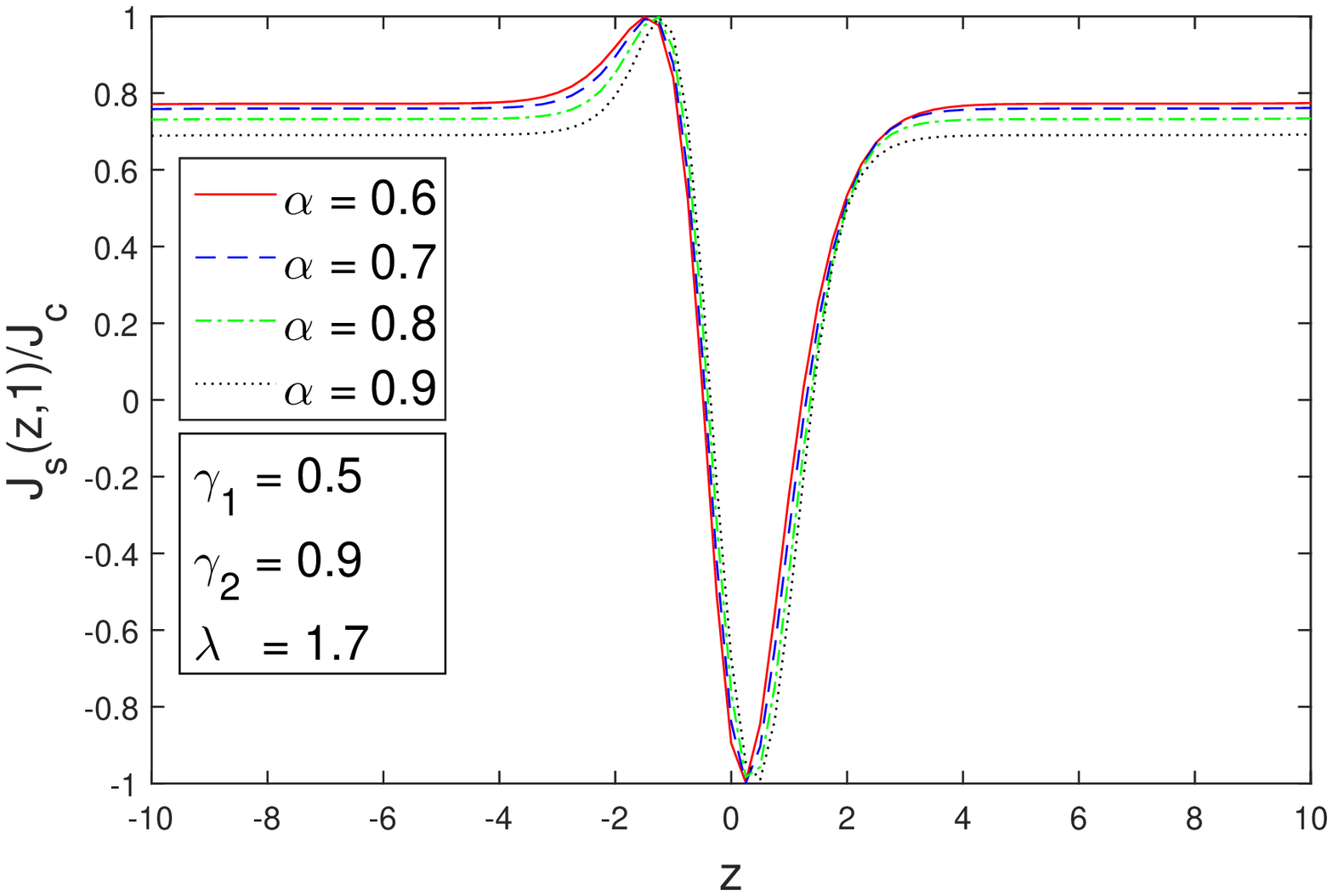}
\label{fig:7.2(b)}} \\
\subfigure[Variation of Voltage with $\alpha$]
{
\includegraphics[width = .45\textwidth, height = .38\textwidth]{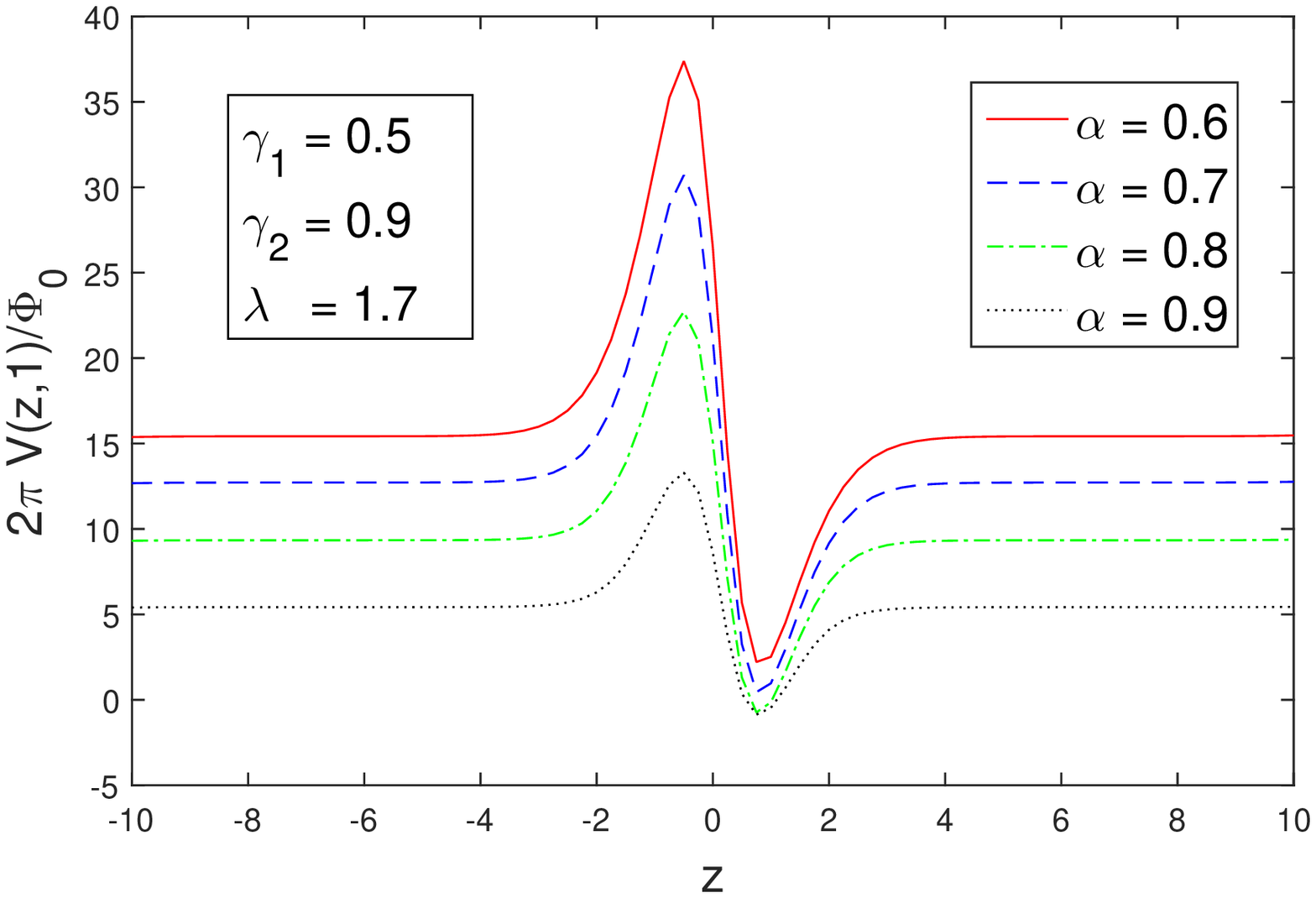}
\label{fig:7.2(c)}}
\caption{Effect of fractional exponent $\alpha$ on Phase difference, Josephson current density and Voltage}
 \label{fig:7.2}
\end{center}
\end{figure}

\begin{figure}[!h]
\begin{center}
\subfigure[Variation of Phase difference
    with ${\gamma}_1$]
{
\includegraphics[width = .45\textwidth, height = .38\textwidth]{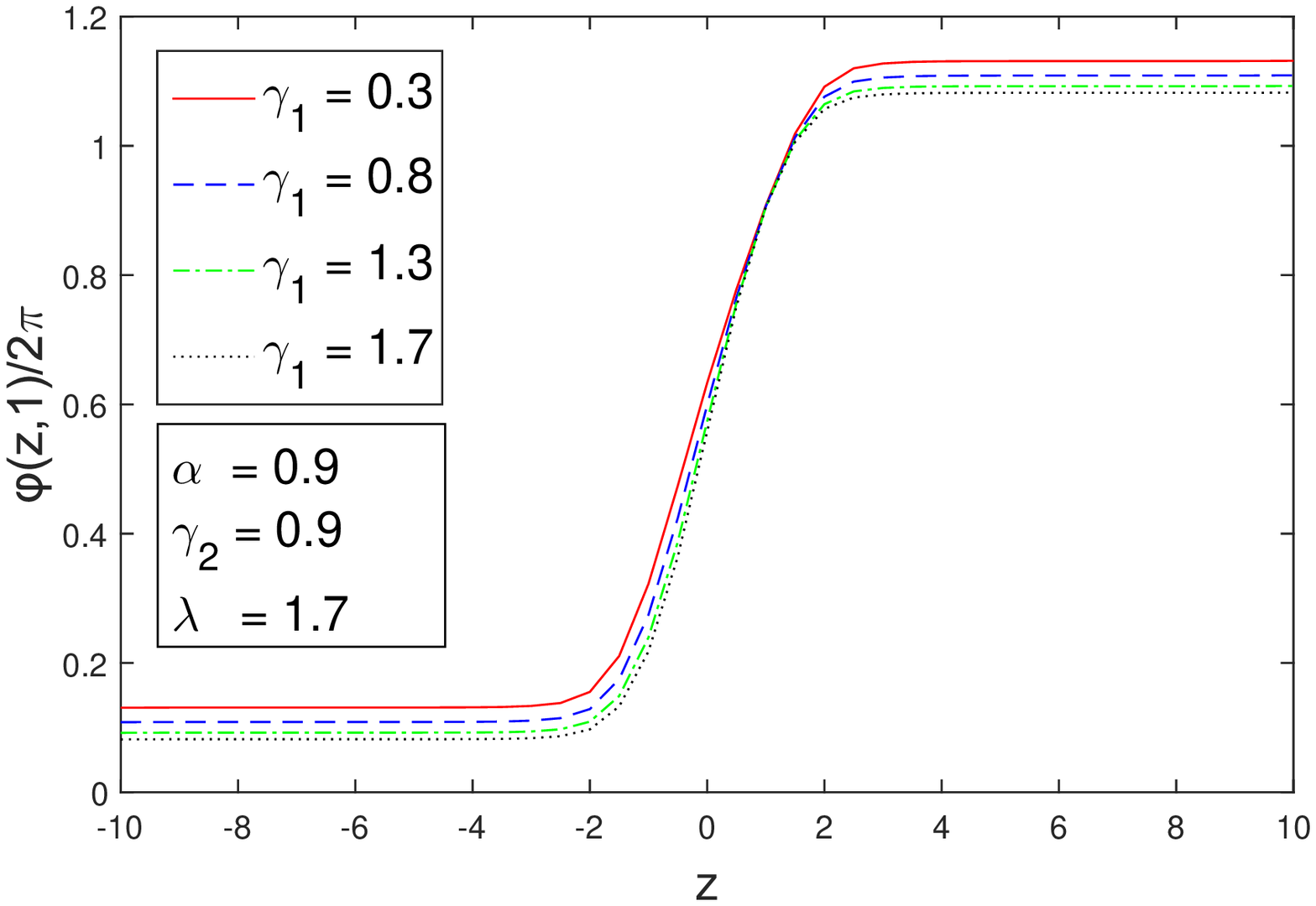}
\label{fig:7.3(a)} }
\subfigure[Variation of Josephson current density with ${\gamma}_1$]
{
\includegraphics[width = .45\textwidth, height = .38\textwidth]{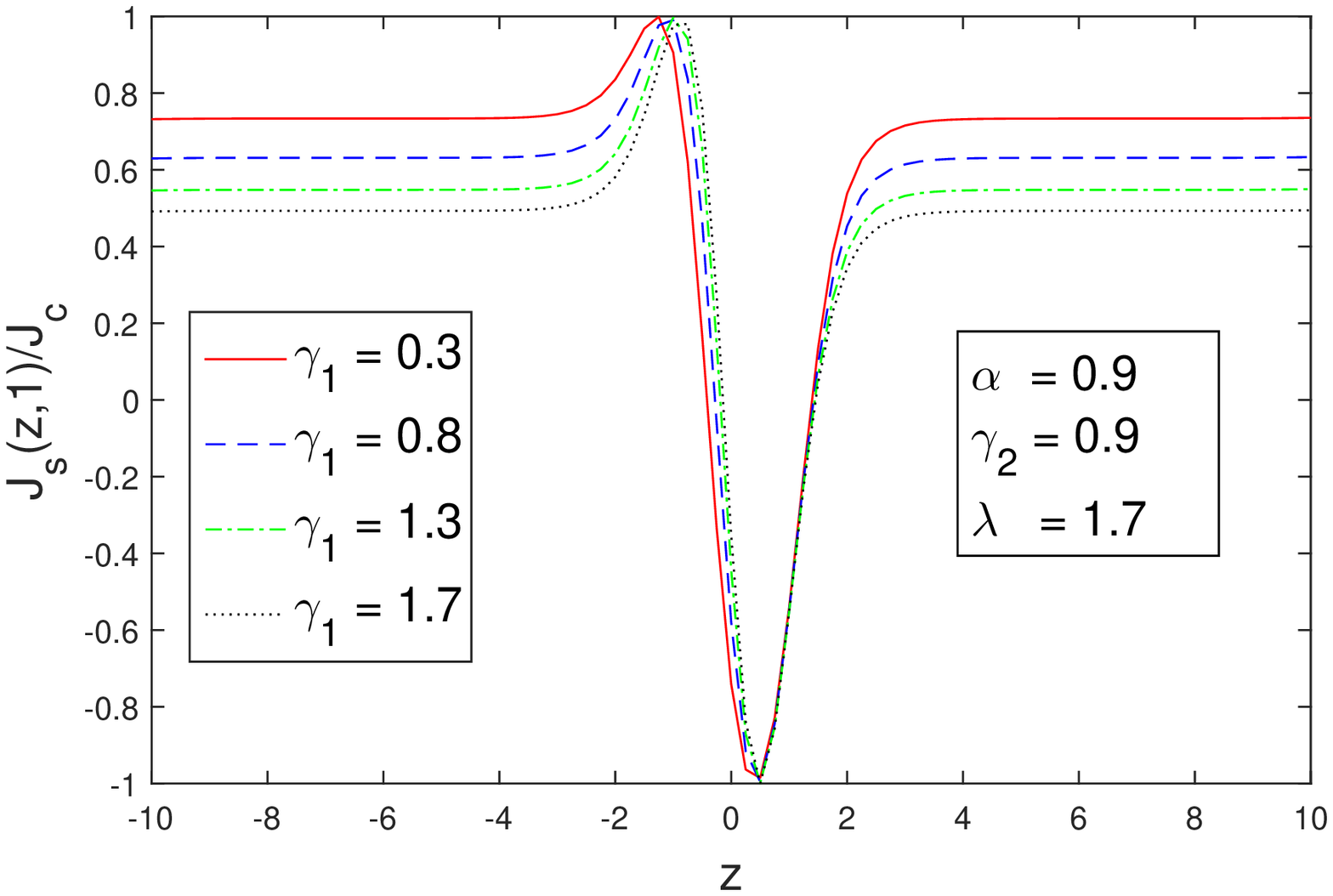}
\label{fig:7.3(b)}}
\subfigure[Variation of Voltage with ${\gamma}_1$]
{
\includegraphics[width = .45\textwidth, height = .38\textwidth]{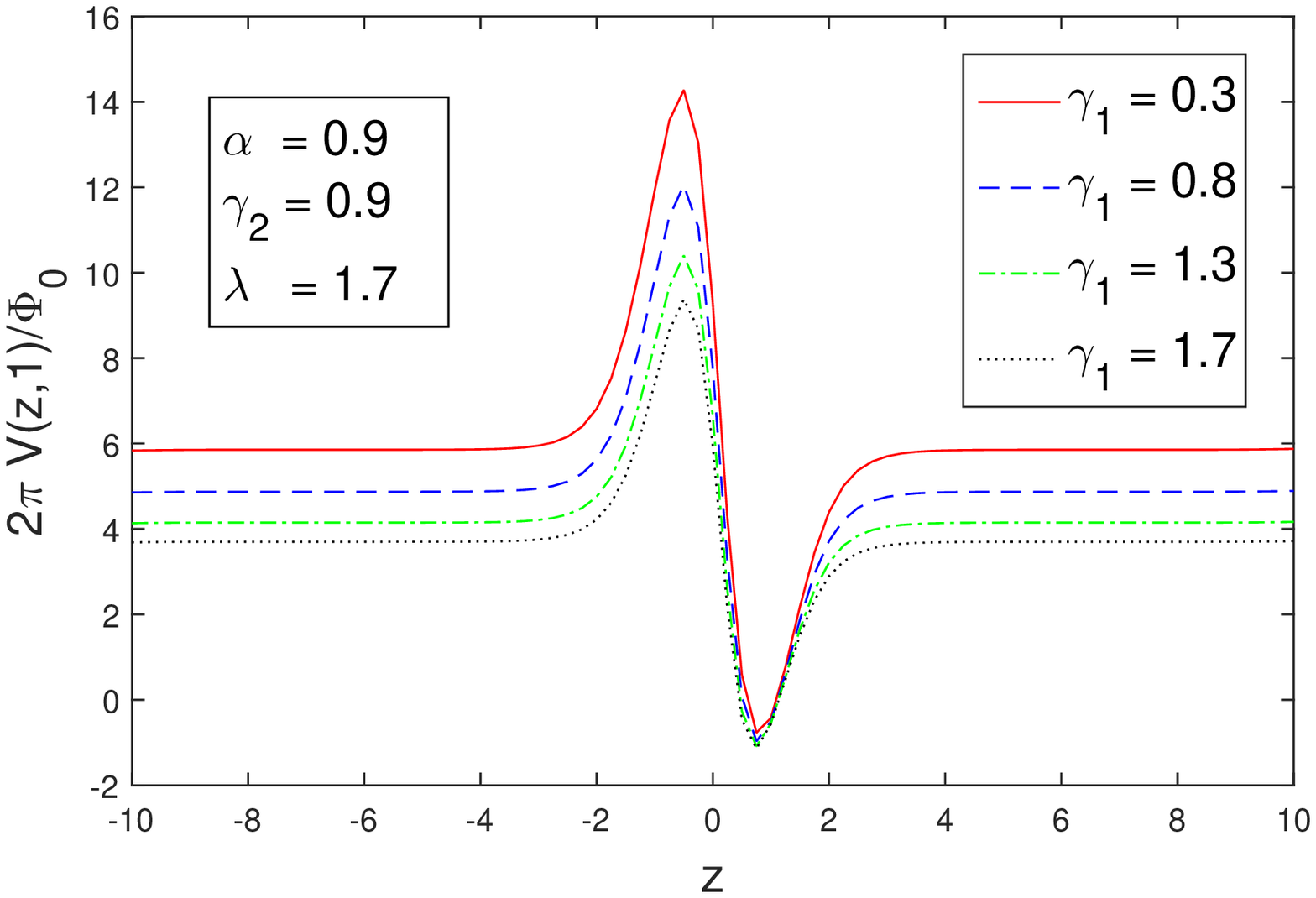}
\label{fig:7.3(c)}}
\caption{Effect of ${\gamma}_1$ on Phase difference, Josephson current density and Voltage}
 \label{fig:7.3}
\end{center}
\end{figure}

\begin{figure}[!h]
\begin{center}
\subfigure[Variation of Phase difference
    with ${\gamma}_2$]
{
\includegraphics[width = .45\textwidth, height = .38\textwidth]{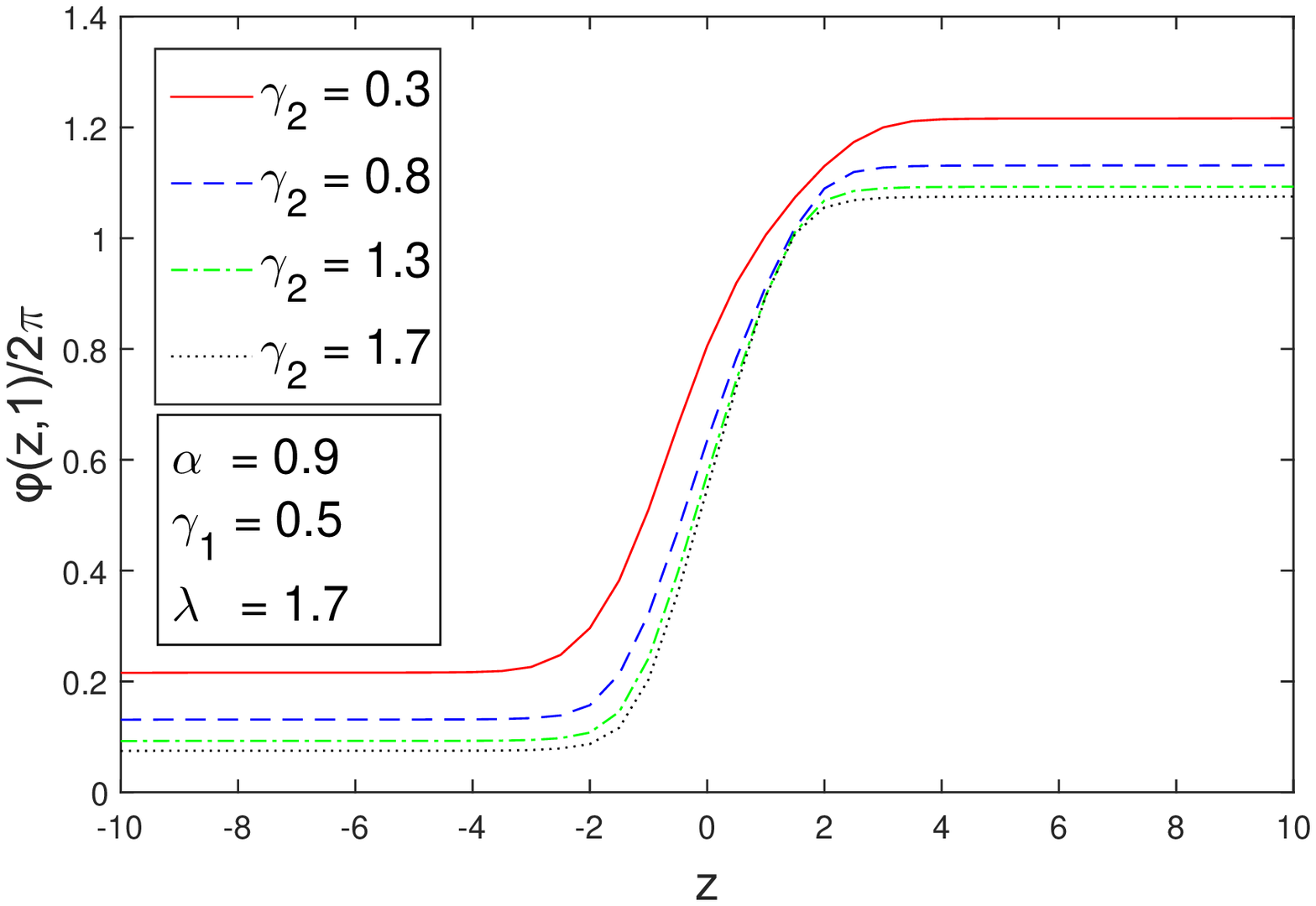}
\label{fig:7.4(a)} }
\subfigure[Variation of Josephson current density with ${\gamma}_2$]
{
\includegraphics[width = .45\textwidth, height = .38\textwidth]{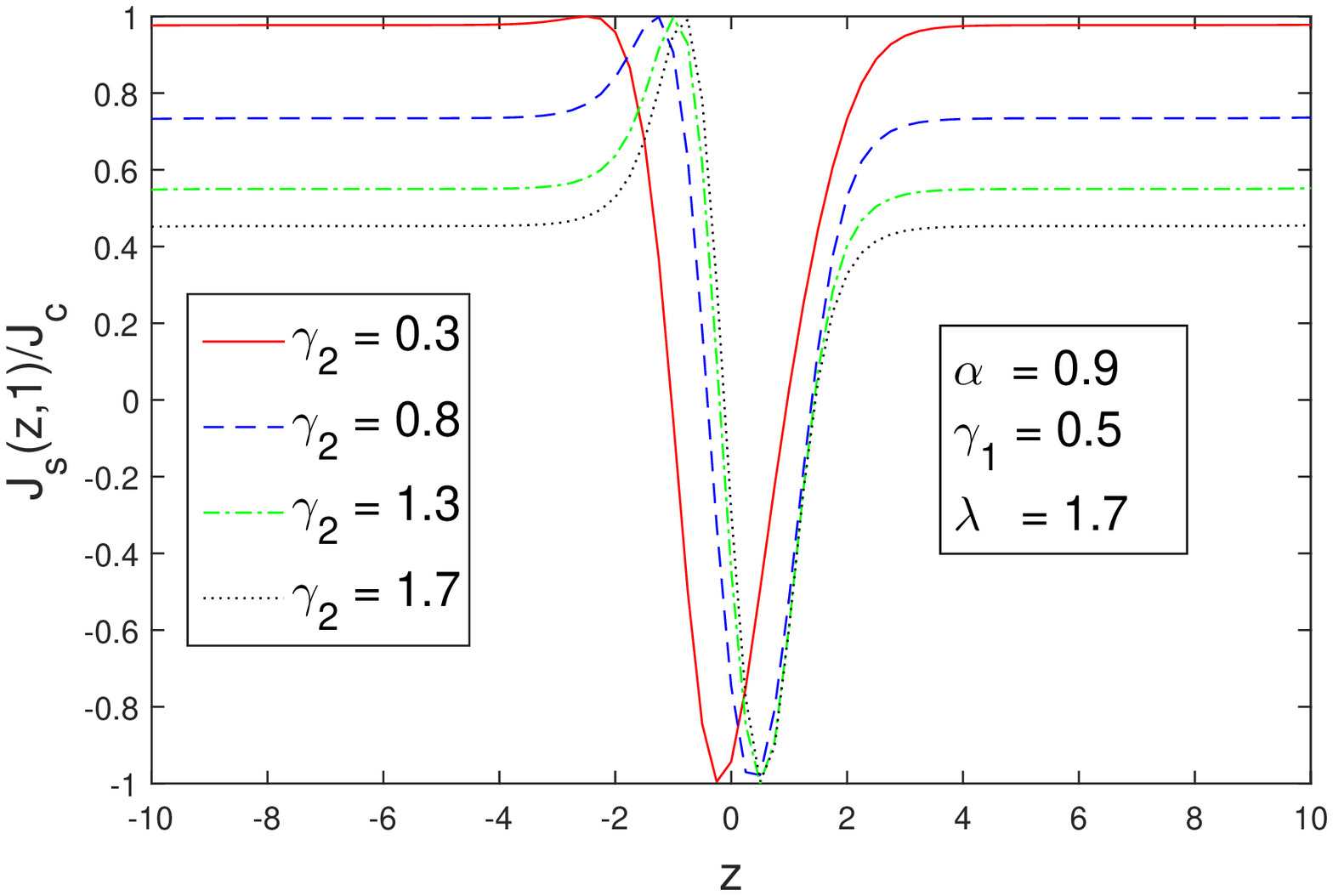}
\label{fig:7.4(b)}}
\subfigure[Variation of Voltage with ${\gamma}_2$]
{
\includegraphics[width = .45\textwidth, height = .38\textwidth]{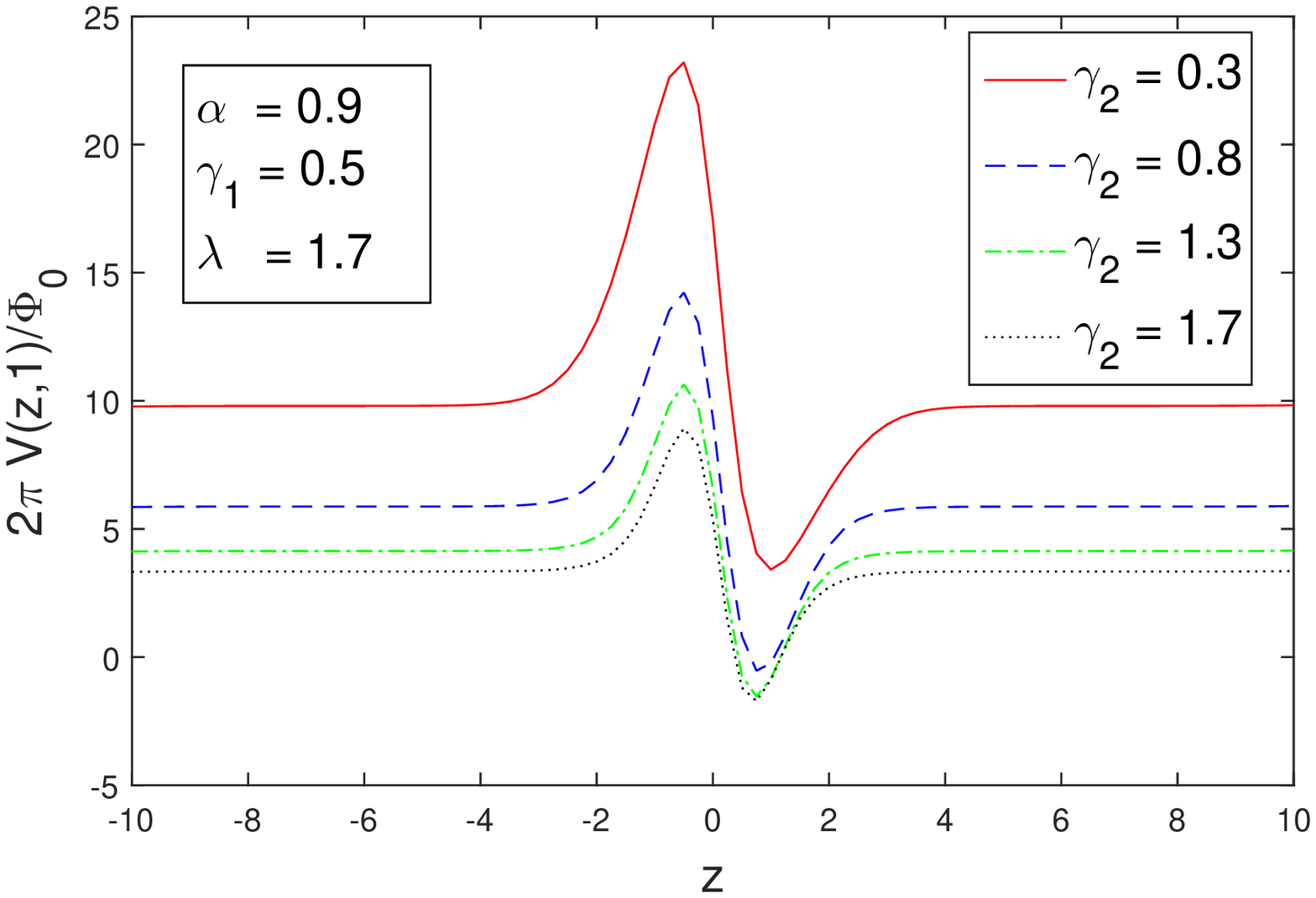}
\label{fig:7.4(c)}}
\caption{Effect of ${\gamma}_2$ on Phase difference, Josephson current density and Voltage}
 \label{fig:7.4}
\end{center}
\end{figure}

\begin{figure}[!ht]
\begin{center}
\subfigure[Variation of Phase difference 
    with $\lambda$]
{
\includegraphics[width = .45\textwidth, height = .38\textwidth]{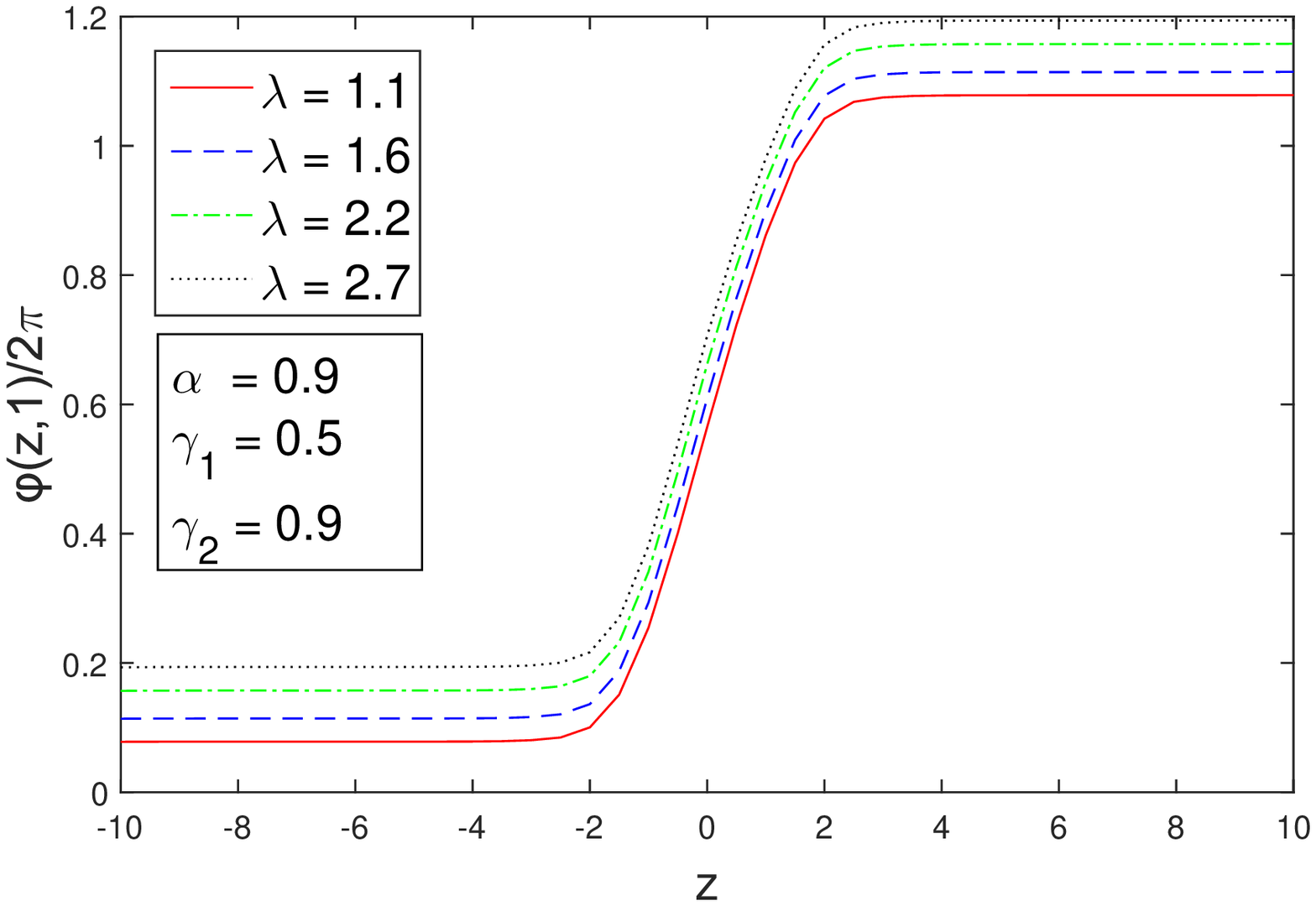}
\label{fig:7.5(a)} }
\subfigure[Variation of Josephson current density with $\lambda$]
{
\includegraphics[width = .45\textwidth, height = .38\textwidth]{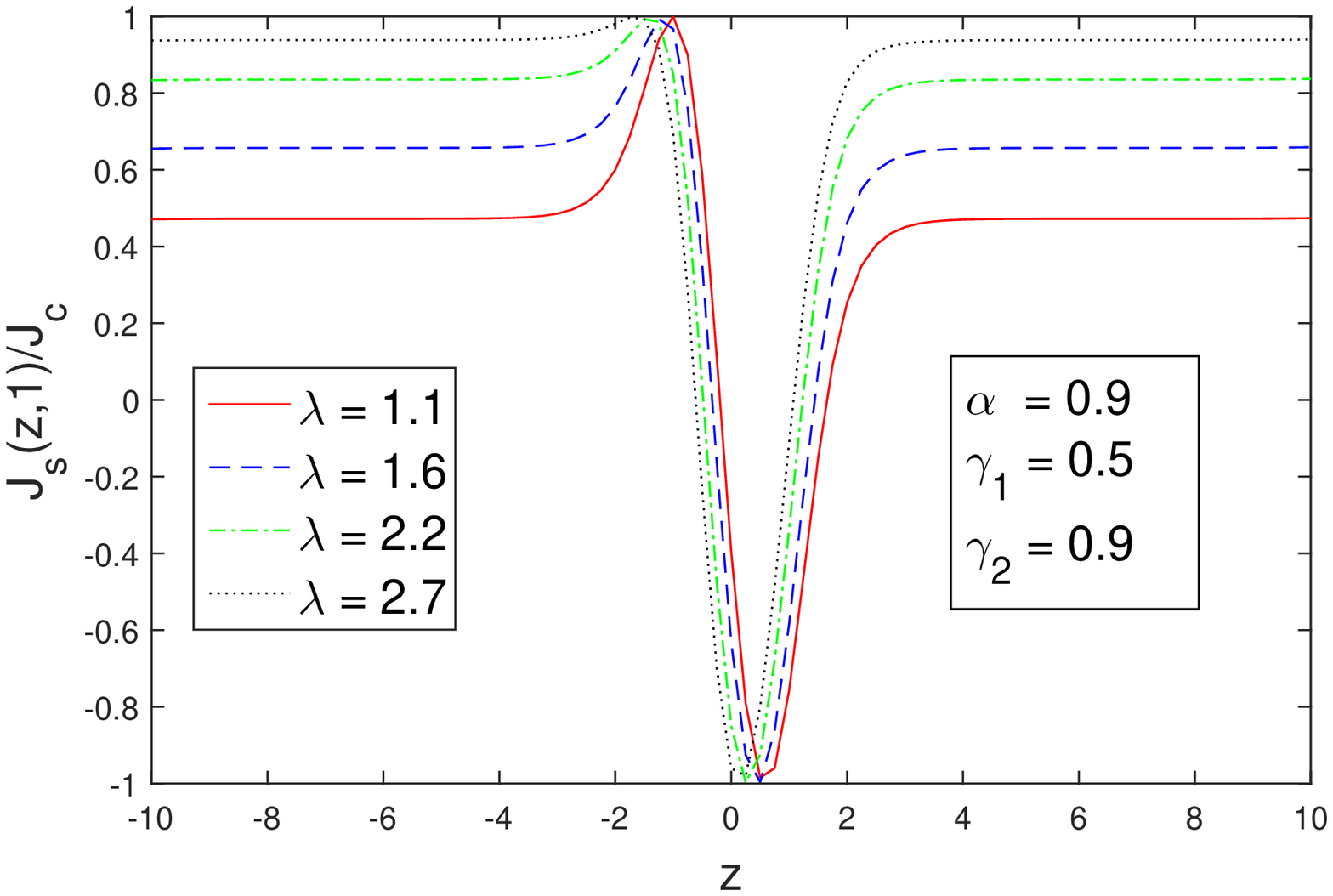}
\label{fig:7.5(b)}}
\subfigure[Variation of Voltage with $\lambda$]
{
\includegraphics[width = .45\textwidth, height = .38\textwidth]{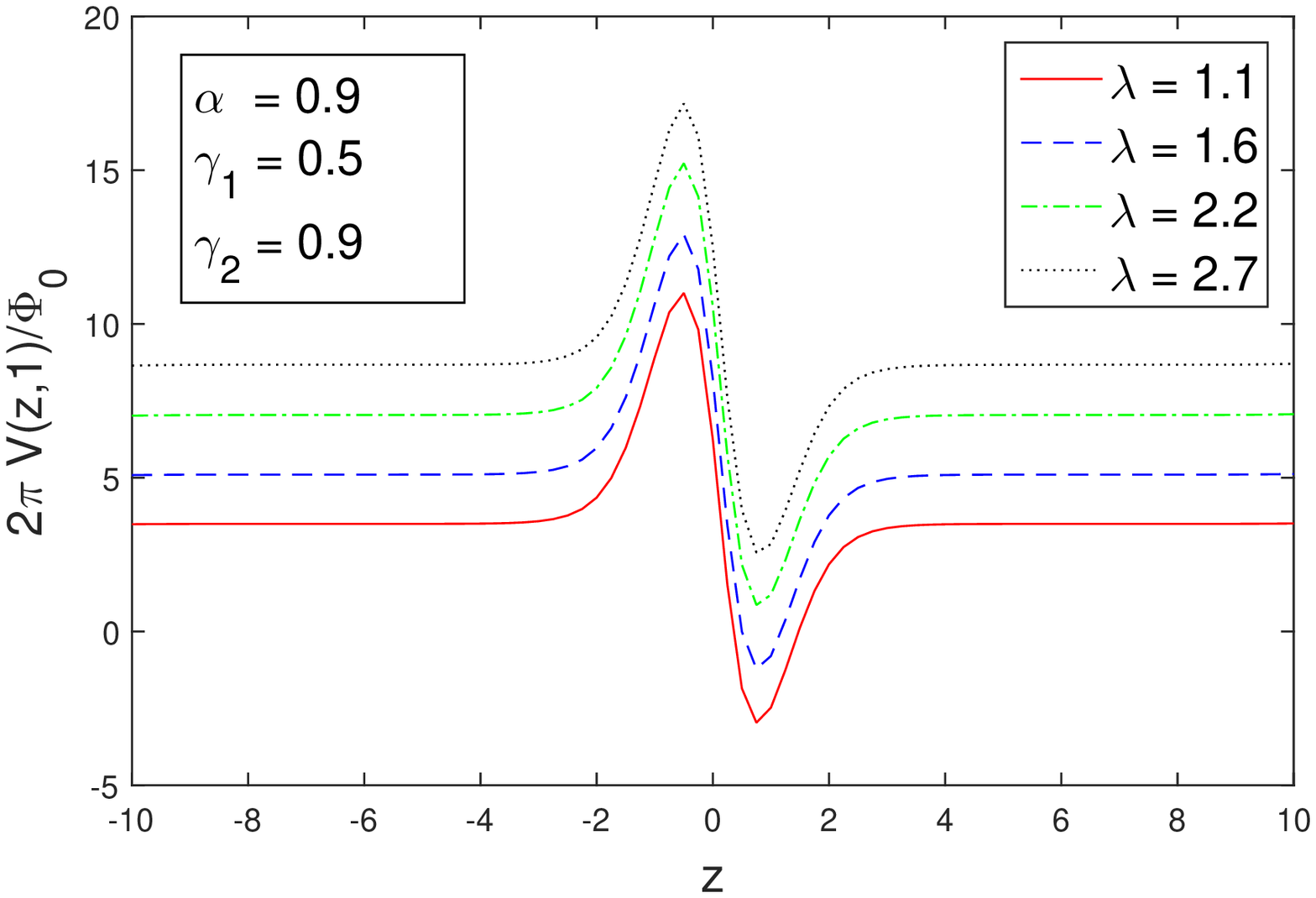}
\label{fig:7.5(c)}}
\caption{Effect of $\lambda$ on Phase difference, Josephson current density and Voltage}
 \label{fig:7.5}
\end{center}]
\begin{center}
\subfigure[]
{
\includegraphics[width = .45\textwidth, height = .38\textwidth]{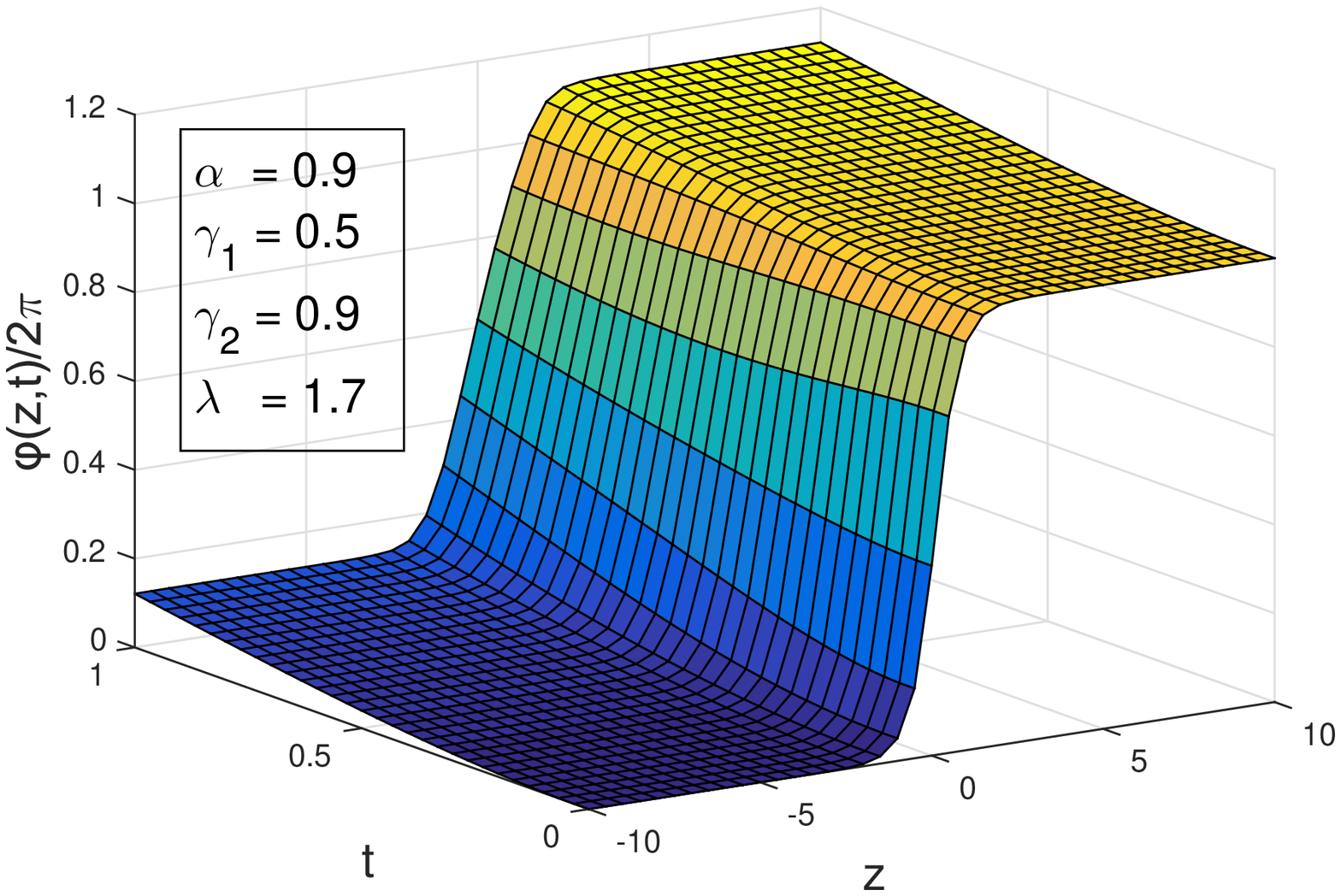}
\label{fig:7.6(a)} }
\subfigure[]
{
\includegraphics[width = .45\textwidth, height = .38\textwidth]{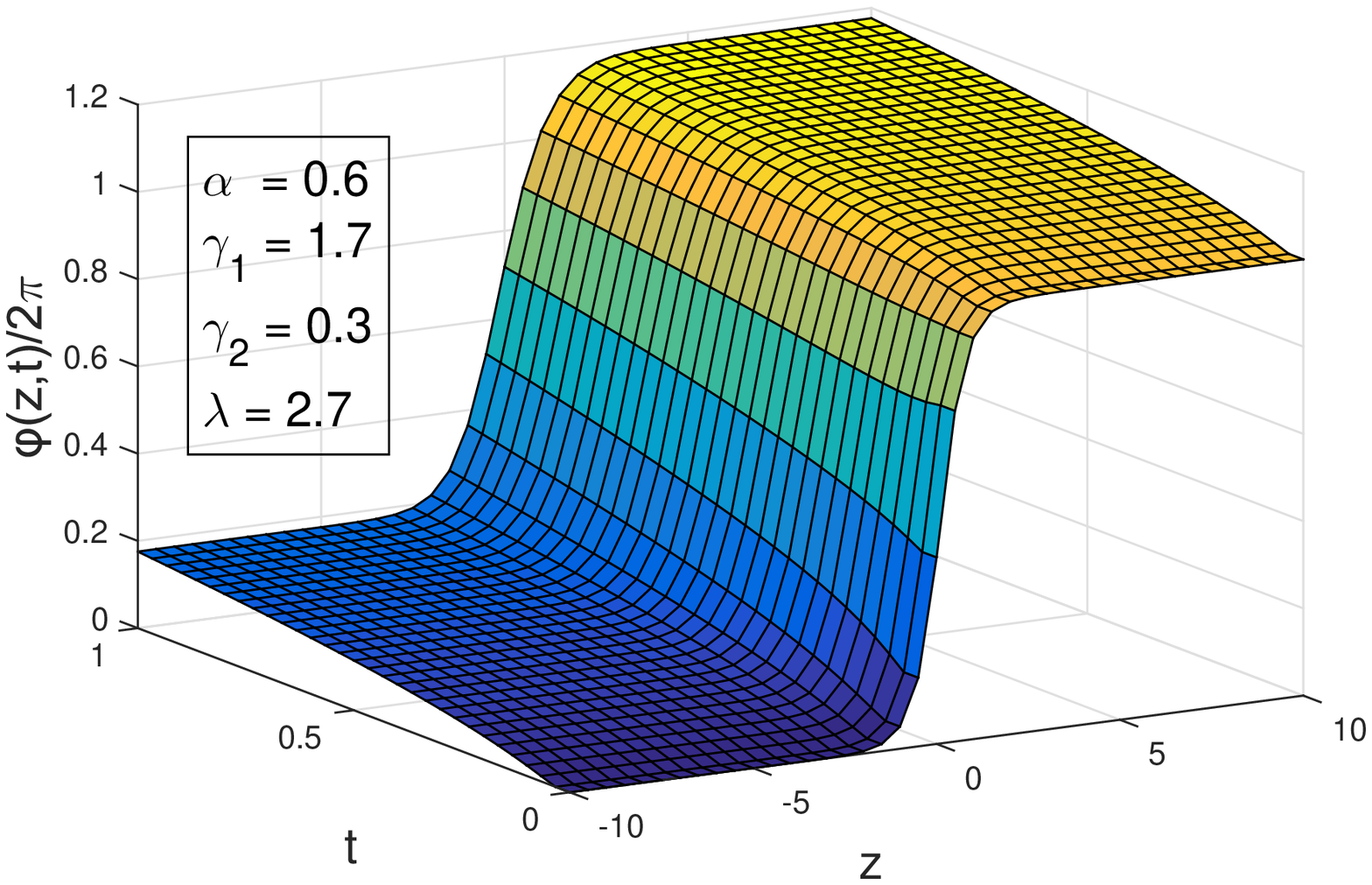}
\label{fig:7.6(b)}}
\caption{Evolution of phase difference for different parameter values over the time interval [0,1]}
 \label{fig:7.6}
\end{center}
\end{figure}

\clearpage
\section{Conclusion} \label{S6}
The evolution of the phase difference between the two superconductors across a long inline Josephson junction, in voltage state and under the influence of magnetic field, is discussed in this article.Josephson current density and voltage across the junction are also calculated from the phase difference.Finite element method along with finite difference method is used to numerically solve the mathematical model describing the evolution of phase difference. Convergence and error analysis of the finite element scheme are also discussed. Effects of different parameters on phase difference, Josephson current density and voltage are studied graphically. Phase difference, Josephson current density and voltage all decrease with increase in $\alpha$ and ${\gamma}_1$. Similar trend is found with increase in ${\gamma}_2$. But with the increase in $\lambda$ phase difference, Josephson current density and voltage increase.\par
The results graphical results obtained for the phase difference, Josephson current density and voltage, using fractional time derivative in the model, are almost similar to the ones we already had for integer order time derivative \cite{r7}, as the fractional exponent $\alpha$ and ${\gamma}_2$ approaches 1.

\clearpage
\bibliographystyle{plain}

\end{document}